\documentclass[11pt]{article}
\usepackage{amsmath,amssymb}

\setlength{\textheight}{24.7cm} \setlength{\textwidth}{14.7cm}
\setlength{\topmargin}{-2cm} \setlength{\oddsidemargin}{0.6cm}
\setlength{\voffset}{15pt}

\def\Z{\mathbb{Z}}
\def\R{\mathbb{R}}

\def\C{C^{\infty}(M)}

\newtheorem{definition}{Definition}[section]
\newtheorem{lemma}[definition]{Lemma}
\newtheorem{proposition}[definition]{Proposition}
\newtheorem{theorem}[definition]{Theorem}
\newtheorem{remark}[definition]{Remark}

\newtheorem{example}[definition]{Example}
\newenvironment{proof}{\noindent{\bf Proof.}}{\hfill $\blacklozenge$}
\newtheorem{th-lep}[definition]{Lepage's decomposition theorem}

\begin{document}

\title{Poisson brackets with prescribed Casimirs}

\author{Pantelis A. Damianou and Fani Petalidou}
\date{}
\maketitle

\vskip 1 cm

\begin{center}
\emph{Dedicated to Giuseppe Marmo, on the occasion of his  65$^{th}$ birthday.}
\end{center}

\begin{abstract}
We consider the problem of constructing Poisson brackets on smooth manifolds $M$ with prescribed Casimir functions. If $M$ is of even dimension, we achieve our construction by considering a suitable almost symplectic structure  on $M$, while, in the case where $M$ is of odd dimension, our objective  is achieved by using a convenient  almost cosymplectic structure. Several examples and applications are presented.
\end{abstract}

\vspace{5mm} \noindent {\bf{Keywords: }}{Poisson bracket, Casimir function, almost symplectic structure, almost cosymplectic structure.}

\vspace{3mm} \noindent {\bf{MSC (2010):}} 53D17, 53D15.

\section{Introduction}
A \emph{Poisson bracket} on the space $\C$ of smooth functions on a smooth manifold $M$ is a skew-symmetric, bilinear map,
\begin{equation*}
\{\cdot,\cdot\} : \C \times \C \to \C,
\end{equation*}
that verifies the Jacobi identity and is a biderivation. Thus, $(\C, \{\cdot,\cdot\})$ has the structure of a Lie algebra. This notion has been introduced in the framework of classical mechanics by S. D. Poisson, who discovered the natural symplectic bracket on $\R^{2n}$ \cite{poi}, a notion that  was later generalized to manifolds of arbitrary dimension by S. Lie \cite{lie}. The interest in this subject, motivated by the important role of Poisson structures in Hamiltonian dynamics, was revived during the last 35 years, after the publication of the fundamental works of A. Lichn\'erowicz \cite{damianou:lch1}, A. Kirillov \cite{kir} and A. Weinstein \cite{wei}, and Poisson geometry has emerged as a major branch of modern differential geometry. The pair $(M, \{\cdot,\cdot\})$ is called a \emph{Poisson manifold} and is foliated by symplectic immersed submanifolds, the \emph{symplectic leaves}. The functions in  the center of $(\C, \{\cdot,\cdot\})$, i.e., the elements $f\in \C$ such that $\{f,\cdot\}=0$, are called the \emph{Casimirs} of the Poisson bracket $\{\cdot,\cdot\}$ and they form the space of first integrals of the symplectic leaves. For this reason, Casimir invariants have acquired a dominant role in the study of integrable systems defined on a  manifold $M$ and in the theory of the local structure of Poisson manifolds \cite{wei}.

To introduce the problem we remark that, for an arbitrary smooth function $f$ on $\R^3$, the bracket
\begin{equation} \label{damianou:equation1}
\{x, y\}={\partial  f \over \partial  z}, \qquad
\{x, z \}=-{\partial  f \over \partial  y} \qquad \mathrm{and} \qquad
\{y, z\}= {\partial  f \over \partial  x}
\end{equation}
is Poisson and it admits $f$ as Casimir. Clearly, if $\Omega = dx\wedge dy\wedge dz$ is the standard volume element on $\R^3$, then the bracket (\ref{damianou:equation1}) can be written as
\begin{equation*}
\{x,y\}\Omega = dx\wedge dy \wedge df, \quad \{x,z\}\Omega = dx\wedge dz \wedge df, \quad \{y,z\}\Omega = dy\wedge dz \wedge df.
\end{equation*}
More generally, let $f_1,f_2,\ldots, f_l$ be functionally independent smooth functions on $\mathbb{R}^{l+2}$ and $\Omega$ a non-vanishing $(l+2)$-smooth form on $\mathbb{R}^{l+2}$. Then, the formula
\begin{equation} \label{damianou:equation2}
\{ g, h \} \Omega =fdg \wedge dh \wedge df_1 \wedge \ldots \wedge df_l, \quad \quad g,\,h \in C^\infty(\R^{l+2}),
\end{equation}
defines a Poisson bracket on $\mathbb{R}^{l+2}$ with $f_1, \ldots, f_l $ as Casimir invariants. In addition, the symplectic leaves of (\ref{damianou:equation2}) have dimension at most $2$. The Jacobian Poisson structure (\ref{damianou:equation2}) (the bracket $\{g,h\}$ is equal, up to a coefficient function $f$, with the usual Jacobian determinant of $(g,h,f_1,\ldots,f_l)$) appeared in \cite{damianou:Dam89} in  1989  where it was attributed to H. Flaschka and T. Ratiu. The first explicit proof of this result was given in \cite{damianou:Grab93}, while the first application of formula (\ref{damianou:equation2}) is presented in \cite{damianou:Dam89, damianou:Dam96} in conjunction with transverse Poisson structures to subregular nilpotent orbits of $\mathfrak{gl}(n,\mathbb{C})$, $n\leq 7$. It was shown that these transverse Poisson structures which are usually computed using Dirac's constraint formula can be calculated much more easily using the Jacobian Poisson structure  (\ref{damianou:equation2}).  This fact was extended to any semisimple Lie algebra in \cite{damianou:Dam07}. In the same paper it is also proved that, after a suitable change of coordinates, the above referred transverse Poisson structures is reduced to  a 3-dimensional structure of type (\ref{damianou:equation1}). We believe that for the other type of orbits, e.g. the minimal orbit and all the other intermediate orbits, one can compute the transverse Poisson structures using the results of the present paper. However, this study will be the subject of a future work. Another interesting application of formula (\ref{damianou:equation2}) appears  in \cite{or}, where the polynomial Poisson algebras with some regularity conditions are studied. We also mention the   study  of a family of rank 2 Poisson structures  in \cite{bermejo}.

The purpose of this paper is to extend the  formula of type (\ref{damianou:equation2}) in the more general case of  higher rank Poisson brackets. The  problem can be formulated as follows: \emph{Given $(m-2k)$ smooth functions $f_1,\ldots,f_{m-2k}$ on an  $m$-dimensional smooth manifold $M$, functionally independent almost everywhere, describe the Poisson brackets $\{\cdot,\cdot\}$ on $\C$ of rank at most $2k$ which have $f_1,\ldots,f_{m-2k}$ as Casimirs.} Firstly, we investigate this problem in the case where $m=2n$, i.e., $M$ is of even dimension. We assume that $M$ is endowed with a suitable  almost symplectic structure $\omega_0$ and we prove that (Theorem \ref{damianou:THEOREM}) a Poisson bracket $\{\cdot,\cdot\}$ on $\C$ with the required  properties is defined, for any $h_1,h_2 \in C^{\infty}(M)$, by the formula
\begin{equation*}
\{h_1,h_2\} \Omega =  -\frac{1}{f} dh_1 \wedge dh_2 \wedge (\sigma + \frac{g}{k-1}\omega_0) \wedge \frac{\omega_0^{k-2}}{(k-2)!}\wedge df_1\wedge\ldots \wedge df_{2n-2k},
\end{equation*}
where $\Omega=\displaystyle{\frac{\omega_0^n}{n!}}$ is a volume element on $M$, $f$ satisfies $f^2 = \det \big(\{f_i,f_j\}_{_0}\big)\neq 0$ ($\{\cdot,\cdot\}_{_0}$ being the bracket defined by $\omega_0$ on $\C$), $\sigma$ is a $2$-form on $M$ satisfying certain  special requirements (see, Proposition \ref{damianou:theorem-cond-delta-sigma}) and $g = i_{\Lambda_0}\sigma$ \footnote{$\Lambda_0$ being the bivector field on $M$ associated to $\omega_0$.}. We proceed  by considering the case where $M$ is an odd-dimensional manifold, i.e., $m=2n+1$, and we establish a similar formula for the Poisson brackets on $\C$ with the prescribed properties. For this construction, we assume that $M$ is equipped with a suitable almost cosymplectic structure $(\vartheta_0,\Theta_0)$ and with the volume form $\Omega = \vartheta_0 \wedge \displaystyle{\frac{\Theta_0^n}{n!}}$. Then, we show that (Theorem \ref{THEOREM-ODD}) a Poisson bracket $\{\cdot,\cdot\}$ on $\C$ with $f_1,\ldots,f_{2n+1-2k}$ as Casimirs functions is defined, for any $h_1,h_2 \in C^{\infty}(M)$, by the formula
\begin{equation*}
\{h_1,h_2\} \Omega =  -\frac{1}{f} dh_1 \wedge dh_2 \wedge (\sigma + \frac{g}{k-1}\Theta_0) \wedge \frac{\Theta_0^{k-2}}{(k-2)!}\wedge df_1\wedge\ldots \wedge df_{2n+1-2k},
\end{equation*}
where $f$ is given by (\ref{f-odd}), $\sigma$ is a $2$-form on $M$ satisfying certain  particular conditions (see, Proposition \ref{prop-sigma-odd}), and $g=i_{\Lambda_0}\sigma$ \footnote{$\Lambda_0$ being the bivector field on $M$ associated to $(\vartheta_0,\Theta_0)$.}.

The proofs of the main results are given in section 3. Section 2 consists of preliminaries and fixing the notation, while in section 4 we present several applications of our formul{\ae} on Dirac brackets, on almost Poisson brackets associated to nonholonomic systems and on Toda and Volterra lattices.

\section{Preliminaries}
We start by fixing our notation and by recalling the most important notions and formul{\ae} needed in this paper. Let $M$ be a real smooth $m$-dimensional manifold, $TM$ and $T^\ast M$ its tangent and cotangent bundles and $C^{\infty}(M)$ the space of smooth functions on $M$. For each $p\in \mathbb{Z}$, we denote by $\mathcal{V}^p(M)$ and $\Omega^p(M)$ the spaces of smooth sections, respectively, of $\bigwedge^p TM$ and $\bigwedge^p T^\ast M$. By convention, we set $\mathcal{V}^p(M) = \Omega^p(M) = \{0\}$, for $p<0$, $\mathcal{V}^0(M) = \Omega^0(M) = C^{\infty}(M)$, and, taking into account the skew-symmetry, we have $\mathcal{V}^p(M) = \Omega^p(M) = \{0\}$, for $p>m$. Finally, we set $\mathcal{V}(M)=\oplus_{p\in \mathbb{Z}}\mathcal{V}^p(M)$ and $\Omega(M) = \oplus_{p\in \mathbb{Z}}\Omega^p(M)$.

\subsection{From multivector fields to differential forms and back}
There is a natural \emph{pairing} between the elements of $\Omega(M)$ and $\mathcal{V}(M)$, i.e.,  the $C^{\infty}(M)$-bilinear map $\langle \cdot, \cdot \rangle : \Omega(M) \times \mathcal{V}(M)  \to  C^{\infty}(M)$, $(\eta, P)  \mapsto  \langle \eta,P\rangle$, defined as follows: For any $\eta \in \Omega^q(M)$ and $P\in \mathcal{V}^p(M)$ with $p \neq q$, $\langle \eta, P\rangle =0$; for any $f,g\in \Omega^0(M)$, $\langle f, g\rangle =fg$; while, if $\eta = \eta_1\wedge \eta_2\wedge \ldots \wedge \eta_p \in \Omega^p(M)$ is a decomposable $p$-form ($\eta_i\in \Omega^1(M)$) and $P = X_1\wedge X_2\wedge \ldots \wedge X_p$ is a decomposable $p$-vector field ($X_i\in \mathcal{V}^1(M)$),
\begin{equation*}
\langle \eta, P\rangle = \langle \eta_1\wedge \eta_2\wedge \ldots \wedge\eta_p, X_1\wedge X_2\wedge \ldots \wedge X_p \rangle = \det(\langle \eta_i,X_j\rangle).
\end{equation*}
The above definition is extended to the nondecomposable forms and multivector fields by bilinearity in a unique way. Precisely, for any $\eta\in \Omega^p(M)$ and $X_1,\ldots,X_p \in \mathcal{V}^1(M)$,
\begin{equation*}
\langle \eta, X_1\wedge X_2\wedge \ldots \wedge X_p\rangle = \eta (X_1,X_2, \ldots, X_p).
\end{equation*}
Similarly, for $P\in \mathcal{V}^p(M)$ and $\eta_1,\eta_2,\ldots,\eta_p \in \Omega^1(M)$,
\begin{equation*}
\langle \eta_1\wedge \eta_2 \wedge \ldots \wedge \eta_p, P \rangle = P(\eta_1,\eta_2,\ldots,\eta_p).
\end{equation*}

We adopt the following convention for the \emph{interior product $i_P: \Omega(M) \to \Omega(M)$ of differential forms by a $p$-vector field} $P$, viewed as a $C^{\infty}(M)$-linear endomorphism of $\Omega(M)$ of degree $-p$. If $P=X \in \mathcal{V}^1(P)$ and $\eta$ is a $q$-form, $i_X\eta$ is the element of $\Omega^{q-1}(M)$ defined, for any $X_1,\ldots,X_{q-1}\in \mathcal{V}^1(M)$, by
\begin{equation*}
(i_X\eta)(X_1,\ldots,X_{q-1}) = \eta (X,X_1,\ldots,X_{q-1}).
\end{equation*}
If $P=X_1\wedge X_2\wedge \ldots \wedge X_p$ is a decomposable $p$-vector field, we set
\begin{equation*}
i_P\eta = i_{X_1\wedge X_2\wedge \ldots \wedge X_p}\eta = i_{X_1}i_{X_2}\ldots i_{X_p}\eta.
\end{equation*}
More generally, recalling that each $P\in \mathcal{V}^p(M)$ can be locally written as the sum of decomposable $p$-vector fields, we define as $i_P\eta$, with $\eta \in \Omega^{q}(M)$ and $q\geq p$, to be the unique element of $\Omega^{q-p}(M)$ such that, for any $Q\in \mathcal{V}^{q-p}(M)$,
\begin{equation}\label{damianou:sign-inter. prod}
\langle i_P\eta, Q\rangle = (-1)^{(p-1)p/2}\langle \eta, P\wedge Q\rangle.
\end{equation}

Similarly, we define the \emph{interior product $j_{\eta} : \mathcal{V}(M) \to \mathcal{V}(M)$ of multivector fields by a $q$-form $\eta$}. If $\eta = \alpha \in \Omega^1(M)$ and $P\in \mathcal{V}^p(M)$, then $j_{\alpha}P$ is the unique $(p-1)$-vector field on $M$ given, for any $\alpha_1,\ldots,\alpha_{p-1}$, by
\begin{equation*}
(j_{\alpha}P)(\alpha_1,\ldots,\alpha_{p-1})= P(\alpha_1,\ldots,\alpha_{p-1}, \alpha).
\end{equation*}
Moreover, if $\eta = \alpha_1\wedge \alpha_2 \wedge \ldots \wedge \alpha_q$ is a decomposable $q$-form, we set
\begin{equation*}
j_{\eta}P = j_{\alpha_1\wedge \alpha_2 \wedge \ldots \wedge \alpha_q}P = j_{\alpha_1}j_{\alpha_2}\ldots j_{\alpha_q}P.
\end{equation*}
Hence, using the fact that any $\eta \in \Omega^q(M)$ can be locally written as the sum of decomposable $q$-forms, we define $j_{\eta}$ to be the $C^{\infty}(M)$-linear endomorphism of $\mathcal{V}(M)$ of degree $-q$ which associates, with each $P\in \mathcal{V}^p(M)$ ($p\geq q$), the unique $(p-q)$-vector field $j_{\eta}P$ defined, for any $\zeta\in \Omega^{p-q}(M)$, by
\begin{equation*}
\langle \zeta, j_{\eta}P\rangle = \langle \zeta\wedge \eta, P\rangle.
\end{equation*}
If the degrees of $\eta$ and $P$ are equal, i.e., $q=p$, the interior products $j_{\eta}P$ and $i_P\eta$ are, up to sign, equal:
\begin{equation*}
j_{\eta}P = (-1)^{(p-1)p/2}i_P\eta = \langle \eta, P\rangle.
\end{equation*}

\vspace{2mm}

The \emph{Schouten bracket} $[\cdot,\cdot] : \mathcal{V}(M)\times \mathcal{V}(M) \to \mathcal{V}(M)$, which is a natural extension of the usual Lie bracket of vector fields on the space $\mathcal{V}(M)$ \cite{damianou:duf-zung, damianou:kz}, is related to  the operator $i$ through  the following useful formula, due to Koszul \cite{damianou:kz}. For any $P\in \mathcal{V}^p(M)$ and $Q\in \mathcal{V}^q(M)$,
\begin{equation}\label{damianou:koszul-formula}
i_{[P,Q]} = [[i_P,d],i_Q],
\end{equation}
where the brackets on the right hand side of (\ref{damianou:koszul-formula}) denote the graded commutator of graded endomorphisms of $\Omega(M)$, i.e., for any two endomorphisms $E_1$ and $E_2$ of $\Omega(M)$ of degrees $e_1$ and $e_2$, respectively, $[E_1,E_2] = E_1\circ E_2 - (-1)^{e_1e_2}E_2\circ E_1$. Hence, we have
\begin{eqnarray}\label{damianou:koszul-formula-2}
\lefteqn{i_{[P,Q]} =  i_P\circ d \circ i_Q - (-1)^p\,d\circ i_P\circ i_Q} \nonumber \\
& & - (-1)^{(p-1)q}\,i_Q\circ i_P\circ d + (-1)^{(p-1)q-p}\,i_Q\circ d \circ i_P.
\end{eqnarray}

\vspace{2mm}

Furthermore, given a smooth \emph{volume form} $\Omega$ on $M$, i.e., a nowhere vanishing element of $\Omega^m(M)$, the interior product of $p$-vector fields on $M$, $p=0,1,\ldots,m$, with $\Omega$ yields a $C^{\infty}(M)$-linear isomorphism $\Psi$ of $\mathcal{V}(M)$ onto $\Omega(M)$ such that, for each degree $p$, $0\leq p \leq m$,
\begin{eqnarray*}
\Psi : \mathcal{V}^p(M) & \to & \Omega^{m-p}(M) \\
P & \mapsto & \Psi(P) = \Psi_P = (-1)^{(p-1)p/2}i_P\Omega.
\end{eqnarray*}
Its inverse map $\Psi^{-1}: \Omega^{m-p}(M) \to \mathcal{V}^p(M)$ is defined, for any $\eta \in \Omega^{m-p}(M)$, by $\Psi^{-1}({\eta}) = j_{\eta} \tilde{\Omega}$, where $\tilde{\Omega}$ denotes the dual $m$-vector field of $\Omega$, i.e., $\langle \Omega, \tilde{\Omega}\rangle =1$. By composing $\Psi$ with the exterior derivative $d$ on $\Omega(M)$ and $\Psi^{-1}$, we obtain the operator $D=-\Psi^{-1}\circ d \circ \Psi$ which was introduced by Koszul \cite{damianou:kz}. It is of degree $-1$ and of square $0$ and it generates the Schouten bracket. For any $P\in \mathcal{V}^p(M)$ and $Q\in \mathcal{V}(M)$,
\begin{equation}\label{damianou:schouten-D}
[P,Q] = (-1)^p\big(D(P\wedge Q)-D(P)\wedge Q - (-1)^pP\wedge D(Q) \big).
\end{equation}

\subsection{Poisson manifolds}
We recall the notion of \emph{Poisson manifold} and some of its properties whose proofs may be found, for example, in the books \cite{damianou:lm, damianou:duf-zung, damianou:vai-b}.

A \emph{Poisson structure} on a smooth manifold $M$ is a Lie algebra structure on $C^{\infty}(M)$ whose the bracket $\{\cdot,\cdot\} : C^{\infty}(M) \times C^{\infty}(M) \to C^{\infty}(M)$ verifies the Leibniz's rule:
\begin{equation*}
\{f,gh\} = \{f,g\}h + g\{f,h\}, \quad \quad \forall \, f,g,h \in C^{\infty}(M).
\end{equation*}
In \cite{damianou:lch1}, Lichn\'erowicz remarks that $\{\cdot,\cdot\}$ gives rise to a contravariant antisymmetric tensor field $\Lambda$  of order $2$ such that $\Lambda(df,dg) = \{f,g\}$, for $f,g\in C^{\infty}(M)$. Conversely, each  such bivector field $\Lambda$ on $M$ gives rise to a bilinear and antisymmetric bracket $\{\cdot,\cdot\}$ on $C^{\infty}(M)$, $\{f,g\} = \Lambda(df,dg)$, $f,g\in C^{\infty}(M)$, which  satisfies the Jacobi identity, i.e., for any $f,g,h \in C^{\infty}(M)$, $\{f,\{g,h\}\} + \{g,\{h,f\}\} + \{h,\{f,g\}\} = 0$, if and only if $[\Lambda,\Lambda]=0$, where $[\cdot , \cdot]$ denotes the Schouten bracket on $\mathcal{V}(M)$. In this case $\Lambda$ is called a \emph{Poisson tensor} and the manifold $(M,\Lambda)$ a \emph{Poisson manifold}. While, in the case where $[\Lambda,\Lambda]\neq 0$ we say that $\Lambda$ is an \emph{almost Poisson tensor}.

As was proved in \cite{damianou:Grab93}, it is a consequence of expression (\ref{damianou:koszul-formula-2}) of $[\cdot,\cdot]$  that an element $\Lambda \in \mathcal{V}^2(M)$ defines a Poisson structure on $M$ if and only if
\begin{equation*}
2i_{\Lambda} d \Psi_{\Lambda} + d \Psi_{\Lambda \wedge \Lambda}=0.\footnote{Because we have adopted a different convention of sign for the interior product $i$ from that in \cite{damianou:Grab93}, our condition differs up to a sign  from the one in  \cite{damianou:Grab93}.}
\end{equation*}
Equivalently, using formula (\ref{damianou:schouten-D}) of $[\cdot, \cdot]$ and the fact that, for any $P\in \mathcal{V}^p(M)$,
\begin{equation*}
\Psi^{-1}\circ i_P = (-1)^{(p-1)p/2}P\wedge \Psi^{-1},
\end{equation*}
the last condition can be written as
\begin{equation}\label{damianou:cond-D-Lambda}
2 \Lambda \wedge D(\Lambda) = D(\Lambda \wedge \Lambda).
\end{equation}

Given a Poisson tensor $\Lambda$ on $M$, we can associate to it a natural homomorphism $\Lambda^\# : \Omega^1(M) \to \mathcal{V}^1(M)$, which maps each element $\alpha$ of $\Omega^1(M)$ to a unique vector field $\Lambda^\#(\alpha)$ such that, for any $\beta\in \Omega^1(M)$,
\begin{equation*}
\langle \alpha \wedge \beta, \Lambda\rangle = \langle \beta, \Lambda^\#(\alpha)\rangle = \Lambda(\alpha,\beta).
\end{equation*}
If $\alpha = df$, for some $f\in C^{\infty}(M)$, the vector field $\Lambda^\#(df)$ is called the \emph{hamiltonian vector field of $f$ with respect to $\Lambda$} and it is denoted by $X_f$. The image $\mathrm{Im}\Lambda^\#$ of $\Lambda^\#$ is a completely integrable distribution on $M$ and defines the \emph{symplectic foliation} of $(M,\Lambda)$ whose  space of first integrals is the space of \emph{Casimir functions of} $\Lambda$, i.e., the space of the functions $f\in C^{\infty}(M)$ which  are solutions of $\Lambda^\#(df) =0$.

Moreover, $\Lambda^\#$ can be extended to a homomorphism, also denoted by $\Lambda^\#$,  from $\Omega^p(M)$ to $\mathcal{V}^p(M)$, $p\in \mathbb{N}$, by setting, for any $f\in C^{\infty}(M)$, $\Lambda^\#(f) = f$, and, for any $\zeta \in \Omega^p(M)$ and $\alpha_1,\ldots,\alpha_p\in \Omega^1(M)$,
\begin{equation}\label{damianou:def-extension}
\Lambda^\#(\zeta)(\alpha_1,\ldots,\alpha_p) = (-1)^p\zeta(\Lambda^\#(\alpha_1),\ldots,\Lambda^\#(\alpha_p)).
\end{equation}
Thus, $\Lambda^\#(\zeta \wedge \eta) = \Lambda^\#(\zeta) \wedge \Lambda^\#(\eta)$, for all $\eta\in \Omega(M)$. When $\Omega(M)$ is equipped with the Koszul bracket $\{\!\! \{ \cdot, \cdot \}\!\! \}$ defined, for any $\zeta \in \Omega^p(M)$ and $\eta \in \Omega(M)$, by
\begin{equation}\label{damianou:bracket-forms}
\{\!\! \{\zeta, \eta \}\!\! \} = (-1)^p \big(\Delta(\zeta \wedge \eta) - \Delta(\zeta)\wedge \eta - (-1)^p\zeta \wedge \Delta(\eta) \big),
\end{equation}
where $\Delta = i_{\Lambda}\circ d - d\circ i_{\Lambda}$, $\Lambda^\#$ becomes a graded Lie algebras homomorphism. Explicitly,
\begin{equation*}
\Lambda^\#(\{\!\! \{ \zeta, \eta \}\!\! \}) = [\Lambda^\#(\zeta),\Lambda^\#(\eta)] \ ,
\end{equation*}
where the bracket on the right hand side is the  Schouten bracket.

\begin{example}\label{example-Poisson}
{\rm Any symplectic manifold $(M,\omega_0)$, where  $\omega_0$ is a nondegenerate closed smooth $2$-form on $M$, is equipped with a Poisson structure $\Lambda_0$ defined by $\omega_0$ as follows. Define the tensor field $\Lambda_0$ to be the image of $\omega_0$ by the extension of the isomorphism $\Lambda_0^\# : \Omega^1(M) \to \mathcal{V}^1(M)$, (inverse of $\omega_0^\flat : \mathcal{V}^1(M) \to \Omega^1(M)$, $X\mapsto \omega_0^\flat (X) = - \omega_0(X, \cdot)$), to $\Omega^2(M)$,  given by (\ref{damianou:def-extension}).}
\end{example}

\subsection{Decomposition theorem for exterior differential forms}
In this subsection, we begin by reviewing some important results concerning the decomposition theorem for exterior differential forms on almost symplectic manifolds. The  complete study  of these results can be found  in \cite{damianou:lm} and \cite{damianou:lib-th}.

Let $(M,\omega_0)$ be a $2n$-dimensional almost symplectic manifold, i.e., $\omega_0$ is a nondegenerate smooth $2$-form on $M$, $\Lambda_0$ the bivector field on $M$ associated with $\omega_0$ (see, Example \ref{example-Poisson}), $\Omega = \displaystyle{\frac{\omega_0^n}{n!}}$ the corresponding volume form on $M$, and $\tilde{\Omega} =\displaystyle{\frac{\Lambda_0^n}{n!}}$ the dual $2n$-vector field of $\Omega$. We define an isomorphism $\ast : \Omega^p(M) \to \Omega^{2n-p}(M)$ by setting, for any $\varphi \in \Omega^p(M)$,
\begin{equation}\label{def-ast}
\ast \, \varphi = (-1)^{(p-1)p/2}\, i_{\Lambda_0^\#(\varphi)}\frac{\omega_0^n}{n!} \ .
\end{equation}

\begin{remark}
{\rm In order to be in agreement with the convention of sign adopted in (\ref{damianou:sign-inter. prod}) for the interior product, we make a  sign convention for $\ast$ different from  the one given in \cite{damianou:lm}.}
\end{remark}

The $(2n-p)$-form $\ast \, \varphi$ is called the \emph{adjoint of $\varphi$ relative to $\omega_0$}. The isomorphism $\ast$ has the following properties:
\begin{enumerate}
\item[i)] $\ast \, \ast = Id$.
\item[ii)] For any $\varphi\in \Omega^p(M)$ and $\psi \in \Omega^q(M)$,
\begin{eqnarray}\label{damianou:property-ii}
\ast\,(\varphi \wedge \psi) & = & (-1)^{(p+q-1)(p+q)/2}\,i_{\Lambda_0^\#(\varphi)\wedge\Lambda_0^\#(\psi)}\frac{\omega_0^n}{n!} \nonumber \\
                            & = & (-1)^{(p-1)p/2}\,i_{\Lambda_0^\#(\varphi)}(\ast\, \psi) = (-1)^{pq + (q-1)q/2} i_{\Lambda_0^\#(\psi)}(\ast \, \varphi).
\end{eqnarray}
\item[iii)] For any $k\leq n$,
\begin{equation*}
\ast\, \frac{\omega_0^k}{k!} = \frac{\omega_0^{n-k}}{(n-k)!}.
\end{equation*}
\end{enumerate}

\begin{definition}
A smooth form $\psi \in \Omega(M)$ such that $i_{\Lambda_0}\psi = 0$ everywhere on $M$ is said to be \emph{effective}. On the other hand, a smooth form $\varphi$ on $M$ is said to be \emph{simple} if it can be written as
\begin{equation*}
\varphi = \psi \wedge \frac{\omega_0^k}{k!},
\end{equation*}
where $\psi$ is effective.
\end{definition}

\begin{proposition}
The adjoint of an effective differential form $\psi$ of degree $p\leq n$ is
\begin{equation*}
\ast \, \psi = (-1)^{p(p+1)/2}\, \psi \wedge \frac{\omega_0^{n-p}}{(n-p)!}.
\end{equation*}
The adjoint $\ast \, \varphi$ of a smooth $(p+2k)$-simple form $\varphi = \psi \wedge \displaystyle{\frac{\omega_0^k}{k!}}$ is
\begin{equation}\label{damianou:adjoint-simple}
\ast \,\varphi = (-1)^{p(p+1)/2}\, \psi \wedge \frac{\omega_0^{n-p-k}}{(n-p-k)!}.
\end{equation}
\end{proposition}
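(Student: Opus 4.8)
\quad Both identities are $C^\infty(M)$-linear and fibrewise: $\wedge$, $\Lambda_0^\#$, the interior products and $\ast$ all act pointwise along each fibre of $T^\ast M$. So it suffices to prove them in $\bigwedge^\bullet T_x^\ast M$ for a fixed $x\in M$, and there one may work in a Darboux coframe $(\theta^1,\dots,\theta^n,\theta^{n+1},\dots,\theta^{2n})$ with $\omega_0=\sum_{i=1}^{n}\theta^i\wedge\theta^{n+i}$; such a coframe exists because $\omega_0$ is nondegenerate, the closedness of $\omega_0$ playing no role. This reduces everything to symplectic linear algebra.

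I would treat the effective case first. Let $\psi$ be effective of degree $p\le n$ and put $\chi:=(-1)^{p(p+1)/2}\,\psi\wedge\frac{\omega_0^{n-p}}{(n-p)!}\in\Omega^{2n-p}(M)$. Since $\ast$ is an isomorphism with $\ast\,\ast=Id$ (property i)), proving $\ast\psi=\chi$ is the same as proving $\ast\chi=\psi$. Using property ii) to pull the factor $\psi$ out of $\ast\!\big(\psi\wedge\frac{\omega_0^{n-p}}{(n-p)!}\big)$ and property iii) in the form $\ast\frac{\omega_0^{n-p}}{(n-p)!}=\frac{\omega_0^{p}}{p!}$, and noting that the sign exponents combine to $(-1)^{p^2}=(-1)^p$, one is reduced to the single identity
\begin{equation*}
i_{\Lambda_0^\#(\psi)}\,\frac{\omega_0^{p}}{p!}=(-1)^{p}\,\psi ,\qquad \psi\ \text{effective of degree}\ p\le n .
\end{equation*}
This I would verify in the Darboux coframe: write $\psi$ as a sum of decomposable $p$-forms $\alpha_1\wedge\cdots\wedge\alpha_p$, use $\Lambda_0^\#(\alpha_1\wedge\cdots\wedge\alpha_p)=\Lambda_0^\#(\alpha_1)\wedge\cdots\wedge\Lambda_0^\#(\alpha_p)$, and peel the interior products $i_{\Lambda_0^\#(\alpha_q)}$ off $\frac{\omega_0^p}{p!}$ one at a time by means of $i_{\Lambda_0^\#(\alpha)}\omega_0=-\alpha$ and $\langle\alpha,\Lambda_0^\#(\beta)\rangle=\Lambda_0(\beta,\alpha)$. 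The ``diagonal'' contribution reproduces $(-1)^p\psi$, while every other term, after summation over the decomposable pieces, is a scalar multiple of an iterated contraction $i_{\Lambda_0}^{\,l}\psi$ ($l\ge1$) wedged with a power of $\omega_0$, hence vanishes because $i_{\Lambda_0}\psi=0$.

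For the simple case, let $\varphi=\psi\wedge\frac{\omega_0^k}{k!}$ with $\psi$ effective of degree $p$. Using the last of the three equivalent expressions of property ii) (with the two wedge factors taken to be $\psi$ and $\frac{\omega_0^k}{k!}$) together with $\Lambda_0^\#\!\big(\frac{\omega_0^k}{k!}\big)=\frac{\Lambda_0^k}{k!}$, one gets $\ast\varphi=(-1)^{k}\,i_{\Lambda_0^k/k!}(\ast\psi)$. Substituting the effective-case formula for $\ast\psi$, it remains to evaluate $i_{\Lambda_0^k/k!}\big(\psi\wedge\frac{\omega_0^{n-p}}{(n-p)!}\big)$, and for this I would use the contraction identity, valid for effective $\psi$ of degree $p$,
\begin{equation*}
i_{\Lambda_0}\Big(\psi\wedge\frac{\omega_0^{j}}{j!}\Big)=-(n-p-j+1)\,\psi\wedge\frac{\omega_0^{j-1}}{(j-1)!},
\end{equation*}
which is a consequence of the $\mathfrak{sl}_2$-type relation $i_{\Lambda_0}(\omega_0\wedge\eta)-\omega_0\wedge i_{\Lambda_0}\eta=-(n-q)\,\eta$ for $\eta\in\Omega^q(M)$ (a short computation in the Darboux coframe; the minus sign is forced by our convention (\ref{damianou:sign-inter. prod})) together with $i_{\Lambda_0}\psi=0$. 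Iterating this identity $k$ times (using $i_{\Lambda_0^k}=i_{\Lambda_0}^{\,k}$ and $i_{\Lambda_0^k/k!}=\tfrac1{k!}i_{\Lambda_0^k}$, which hold with our conventions) gives $i_{\Lambda_0^k/k!}\big(\psi\wedge\frac{\omega_0^{n-p}}{(n-p)!}\big)=(-1)^{k}\,\psi\wedge\frac{\omega_0^{n-p-k}}{(n-p-k)!}$, and the two factors $(-1)^k$ cancel, producing precisely (\ref{damianou:adjoint-simple}).

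The main obstacle is the bookkeeping in the effective case: checking that, once all the interior products against $\frac{\omega_0^p}{p!}$ have been expanded, the non-diagonal terms regroup into iterated contractions of $\psi$ with $\Lambda_0$, so that effectiveness annihilates them. One can sidestep it by appealing instead to the Lefschetz (primitive) decomposition of $\bigwedge^\bullet T_x^\ast M$ for the $\mathfrak{sl}_2$-triple $(\,\omega_0\wedge\cdot\,,\ i_{\Lambda_0},\ \text{degree})$ and the classical expression of $\ast$ on each primitive component; these are exactly the computations of \cite{damianou:lm,damianou:lib-th}, from which the proposition follows after the sign adjustment of $\ast$ needed to match (\ref{damianou:sign-inter. prod}).
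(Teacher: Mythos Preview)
The paper does not supply its own proof of this proposition: it is stated in the preliminaries as a known result, with the explicit remark that ``the complete study of these results can be found in \cite{damianou:lm} and \cite{damianou:lib-th}.'' So there is nothing to compare against except those references.

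Your approach is correct and is precisely the classical one carried out in \cite{damianou:lm,damianou:lib-th}, up to the sign convention (\ref{damianou:sign-inter. prod}) that the paper adopts. The pointwise reduction to a Darboux coframe is legitimate (only nondegeneracy of $\omega_0$ is used), your sign arithmetic checks out ($(-1)^{p(p+1)/2}(-1)^{(p-1)p/2}=(-1)^{p^2}=(-1)^p$ and $(-1)^{p\cdot 2k+(2k-1)2k/2}=(-1)^k$), the identity $i_{\Lambda_0^k}=(i_{\Lambda_0})^k$ does hold with these conventions, and the iteration of the contraction rule $i_{\Lambda_0}\big(\psi\wedge\frac{\omega_0^{j}}{j!}\big)=-(n-p-j+1)\,\psi\wedge\frac{\omega_0^{j-1}}{(j-1)!}$ starting at $j=n-p$ produces exactly $(-1)^k k!$, giving (\ref{damianou:adjoint-simple}). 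The one place where you correctly flag a difficulty---the regrouping of cross terms in $i_{\Lambda_0^\#(\psi)}\frac{\omega_0^p}{p!}$ into iterated $i_{\Lambda_0}$-contractions of $\psi$---is indeed the heart of the argument, and your proposed alternative via the $\mathfrak{sl}_2$-triple $(\omega_0\wedge\cdot,\,i_{\Lambda_0},\,\deg)$ and the primitive decomposition is the cleanest way to handle it; that is exactly what the cited references do.
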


\begin{th-lep}
Every differential form $\varphi\in \Omega(M)$, of degree $p\leq n$, may be uniquely decomposed as the sum
\begin{equation*}
\varphi = \psi_p + \psi_{p-2}\wedge \omega_0 + \ldots + \psi_{p-2q}\wedge \frac{\omega_0^q}{q!},
\end{equation*}
with $q\leq [p/2]$ ($[p/2]$ being the largest integer less than or equal to $p/2$), where, for $s = 0, \ldots, q$, the differential forms $\psi_{p-2s}$ are effective and may be calculated from $\varphi$ by means of iteration of the operator $i_{\Lambda_0}$. Then, its adjoint $\ast \, \varphi$ may be uniquely written as the sum
\begin{equation*}
\ast\, \varphi = (-1)^{p(p+1)/2}\big(\psi_p - \psi_{p-2}\wedge \frac{\omega_0}{n-p+1} + \ldots + (-1)^q\frac{(n-p)!}{(n-p+q)!}\psi_{p-2q}\wedge \omega_0^q \big)\wedge \frac{\omega_0^{n-p}}{(n-p)!}.
\end{equation*}
\end{th-lep}

\vspace{2mm}
We continue by indicating the relation which links $\ast$ with $\Psi$ and its effect on Poisson structures. Since $\Lambda_0^\# : \Omega^p(M) \to \mathcal{V}^p(M)$, $p \in \mathbb{N}$, defined by (\ref{damianou:def-extension}), is an isomorphism, for any smooth $p$-vector field $P$ on $M$ there exists an unique $p$-form $\sigma_p \in \Omega^p(M)$ such that $P = \Lambda_0^\#(\sigma_p)$. So, it is clear that
\begin{equation}\label{damianou:Psi-sigma}
\Psi (P) = \ast \, \sigma_p.
\end{equation}
In particular, a bivector field $\Lambda$ on $(M,\omega_0)$ can be viewed as the image $\Lambda_0^\#(\sigma)$ of a $2$-form $\sigma$ on $M$ by the isomorphism $\Lambda_0^\#$. We want to establish the condition on $\sigma$ under which $\Lambda = \Lambda_0^\#(\sigma)$ is a Poisson tensor. For this reason, we consider the \emph{codifferential operator}
\begin{equation*}
\delta = \ast \, d\,\ast
\end{equation*}
introduced in \cite{damianou:lib-th}, which is of degree $-1$ and satisfies the relation $\delta^2 = 0$, and we prove:

\begin{lemma}
For any differential form $\zeta$ on $(M,\omega_0)$ of degree $p\leq n$,
\begin{equation}\label{damianou:Psi-1,ast}
\Psi^{-1}(\zeta) = \Lambda_0^\#(\ast\,\zeta).
\end{equation}
\end{lemma}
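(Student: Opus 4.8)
The plan is to deduce the identity directly from equation (\ref{damianou:Psi-sigma}) together with the involutivity $\ast\,\ast = Id$ of the adjoint operator (property i)), so that essentially no computation is needed beyond a bit of degree bookkeeping.

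First I would recall the content of (\ref{damianou:Psi-sigma}). Since $\Lambda_0^\# : \Omega^q(M) \to \mathcal{V}^q(M)$ is an isomorphism in every degree $q$, and $\Psi : \mathcal{V}^q(M) \to \Omega^{2n-q}(M)$ is the isomorphism $P \mapsto (-1)^{(q-1)q/2}\, i_P\Omega$ with $\Omega = \omega_0^n/n!$, the very definition (\ref{def-ast}) of $\ast$ says that, writing an arbitrary $q$-vector field $P$ as $P = \Lambda_0^\#(\sigma)$ for the unique $\sigma \in \Omega^q(M)$, one has $\Psi(P) = \ast\,\sigma$.

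Now fix $\zeta \in \Omega^p(M)$ with $p \leq n$. Then $\ast\,\zeta \in \Omega^{2n-p}(M)$, and I set $Q := \Lambda_0^\#(\ast\,\zeta) \in \mathcal{V}^{2n-p}(M)$; by construction $\ast\,\zeta$ is precisely the unique $(2n-p)$-form whose image under $\Lambda_0^\#$ is $Q$. Applying (\ref{damianou:Psi-sigma}) to $Q$ (with $q = 2n-p$ and $\sigma = \ast\,\zeta$) gives $\Psi(Q) = \ast\,(\ast\,\zeta)$, and property i) of $\ast$, namely $\ast\,\ast = Id$, yields $\Psi(Q) = \zeta$. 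Since $\Psi$ is a bijection, this is equivalent to $\Psi^{-1}(\zeta) = Q = \Lambda_0^\#(\ast\,\zeta)$, which is the assertion.

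There is essentially no hard step; the only points needing care are that the degrees match — $\Psi^{-1}$ sends the $p$-form $\zeta$ to a $(2n-p)$-vector field, exactly the degree of $\Lambda_0^\#(\ast\,\zeta)$ — and that $\ast\,\ast = Id$ is applied here in degree $2n-p \geq n$, which is covered by property i). (Alternatively one could compute from the explicit inverse $\Psi^{-1}(\eta) = j_\eta \tilde{\Omega}$ with $\tilde{\Omega} = \Lambda_0^n/n!$, but that route is longer and less transparent than simply chaining (\ref{damianou:Psi-sigma}) with the involutivity of $\ast$.)
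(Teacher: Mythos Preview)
Your argument is correct and genuinely different from the paper's. The paper proves the identity by a direct pairing computation: it fixes an arbitrary $(2n-p)$-form $\eta$, expands $\langle \eta,\Psi^{-1}(\zeta)\rangle = \langle \eta\wedge\zeta,\Lambda_0^n/n!\rangle$, pushes $\Lambda_0^\#$ through the pairing using the definition of the extension~(\ref{damianou:def-extension}), and tracks signs until the expression reduces to $\langle \eta,\Lambda_0^\#(\ast\zeta)\rangle$. Your route is instead purely compositional: since (\ref{damianou:Psi-sigma}) says $\Psi = \ast\circ(\Lambda_0^\#)^{-1}$ as isomorphisms in every degree, inverting and using $\ast^{-1}=\ast$ (property~i)) gives $\Psi^{-1}=\Lambda_0^\#\circ\ast$ immediately. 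What you gain is brevity and clarity --- no sign chase is needed, and in fact your argument shows the formula holds for all $p$, not just $p\le n$. What the paper's approach buys is independence from the involutivity property: its computation works from the explicit formula $\Psi^{-1}(\zeta)=j_\zeta\tilde\Omega$ and the definition of $\ast$ alone, so it would go through even if $\ast\,\ast=Id$ had not been recorded beforehand.
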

\begin{proof}
Let $\eta$ be a smooth $(2n-p)$-form on $M$. We have
\begin{eqnarray*}
\langle \eta,\, \Psi^{-1}(\zeta) \rangle & = & \langle \eta, \,j_{\zeta}\frac{\Lambda_0^n}{n!} \rangle  = \langle \eta \wedge \zeta, \, \frac{\Lambda_0^n}{n!}\rangle \\
& = & (-1)^{2n} \langle \frac{\omega_0^n}{n!},\, \Lambda_0^\#(\eta)\wedge \Lambda_0^\#(\zeta)\rangle = (-1)^{p(2n-p)}\langle \frac{\omega_0^n}{n!}, \, \Lambda_0^\#(\zeta)\wedge\Lambda_0^\#(\eta)  \rangle \\
& = & (-1)^{p(2n-p)}(-1)^{(p-1)p/2}\,\langle i_{\Lambda_0^\#(\zeta)}\frac{\omega_0^n}{n!}, \,\Lambda_0^\#(\eta)\rangle \\
& = & (-1)^{p(2n-p)}(-1)^{(p-1)p/2}(-1)^{2n-p}\langle \eta, \,\Lambda_0^\#(i_{\Lambda_0^\#(\zeta)}\frac{\omega_0^n}{n!}) \rangle \\
& = & (-1)^{(p-1)p/2}\,\langle \eta,\, \Lambda_0^\#(i_{\Lambda_0^\#(\zeta)}\frac{\omega_0^n}{n!}) \rangle = \langle \eta,\, \Lambda_0^\#(\ast\, \zeta)\rangle,
\end{eqnarray*}
whence (\ref{damianou:Psi-1,ast}) follows. (We remark that the number $p(2n-p)+(2n-p) = (2n-p)(p+1)$ is even for any $p\in \mathbb{N}$.)
\end{proof}

\begin{proposition}\label{damianou:theorem-cond-delta-sigma}
Using the same notation, $\Lambda = \Lambda_0^\#(\sigma)$ defines a Poisson structure on $(M,\omega_0)$ if and only if
\begin{equation}\label{damianou:cond-delta-sigma}
2\sigma \wedge \delta(\sigma) = \delta (\sigma \wedge \sigma).
\end{equation}
\end{proposition}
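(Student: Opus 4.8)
The plan is to transport the intrinsic Poisson condition (\ref{damianou:cond-D-Lambda}), namely $2\Lambda\wedge D(\Lambda)=D(\Lambda\wedge\Lambda)$ with $D=-\Psi^{-1}\circ d\circ\Psi$, through the isomorphism $\Lambda_0^\#$ into an equivalent statement about the generating $2$-form $\sigma$ (recall $\Lambda=\Lambda_0^\#(\sigma)$, and every bivector field arises this way since $\Lambda_0^\#\colon\Omega^2(M)\to\mathcal{V}^2(M)$ is an isomorphism). The first step is to record the algebraic relation among the three isomorphisms in play: directly from the definition (\ref{def-ast}) of $\ast$ and the definition of $\Psi$ one has $\Psi\circ\Lambda_0^\#=\ast$ in every degree, which is precisely (\ref{damianou:Psi-sigma}); combined with the involutivity $\ast\,\ast=\mathrm{Id}$ this yields $\Psi^{-1}=\Lambda_0^\#\circ\ast$ and $\Psi=\ast\circ(\Lambda_0^\#)^{-1}$. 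Substituting these into $D=-\Psi^{-1}\circ d\circ\Psi$ and recalling $\delta=\ast\,d\,\ast$, I obtain the conjugation formula $D=-\,\Lambda_0^\#\circ\delta\circ(\Lambda_0^\#)^{-1}$; in words, under $\Lambda_0^\#$ the Koszul operator $D$ on multivector fields corresponds to $-\delta$ on forms.

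Then I would simply substitute $\Lambda=\Lambda_0^\#(\sigma)$ into (\ref{damianou:cond-D-Lambda}). From the conjugation formula, $D(\Lambda)=-\Lambda_0^\#(\delta\sigma)$. Since $\Lambda_0^\#$ is a homomorphism for the exterior product, $\Lambda\wedge\Lambda=\Lambda_0^\#(\sigma\wedge\sigma)$, hence $D(\Lambda\wedge\Lambda)=-\Lambda_0^\#(\delta(\sigma\wedge\sigma))$, and likewise $2\Lambda\wedge D(\Lambda)=-2\,\Lambda_0^\#(\sigma\wedge\delta(\sigma))$. Therefore (\ref{damianou:cond-D-Lambda}) reads $\Lambda_0^\#\big(2\sigma\wedge\delta(\sigma)-\delta(\sigma\wedge\sigma)\big)=0$, and since $\Lambda_0^\#$ is injective this is equivalent to (\ref{damianou:cond-delta-sigma}).

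The one point that needs care — the main obstacle, modest as it is — is the degree bookkeeping in the first step: applying $D$ to $\Lambda$ and to $\Lambda\wedge\Lambda$ feeds $\Psi^{-1}$ the forms $d(\ast\sigma)\in\Omega^{2n-1}(M)$ and $d(\ast(\sigma\wedge\sigma))\in\Omega^{2n-3}(M)$, whose degrees exceed $n$ in general, whereas the lemma behind (\ref{damianou:Psi-1,ast}) is stated only for degrees $\le n$. So I would insert a remark that the identity $\Psi^{-1}=\Lambda_0^\#\circ\ast$ in fact holds in every degree, being immediate from (\ref{def-ast}) together with $\ast\,\ast=\mathrm{Id}$. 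Once the conjugation formula $D=-\Lambda_0^\#\circ\delta\circ(\Lambda_0^\#)^{-1}$ is available in all degrees, the rest is the direct substitution above.
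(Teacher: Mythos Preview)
Your proof is correct and follows essentially the same route as the paper: both start from the condition $2\Lambda\wedge D(\Lambda)=D(\Lambda\wedge\Lambda)$, use $\Psi\circ\Lambda_0^\#=\ast$ and its inverse form $\Psi^{-1}=\Lambda_0^\#\circ\ast$, and then cancel $\Lambda_0^\#$ by injectivity. Your packaging via the conjugation formula $D=-\Lambda_0^\#\circ\delta\circ(\Lambda_0^\#)^{-1}$ is a clean way to phrase exactly what the paper does line by line, and your remark on the degree restriction in Lemma~\ref{damianou:Psi-1,ast} (actually, equation~(\ref{damianou:Psi-1,ast})) is well taken --- the paper itself applies that identity to $d(\ast\sigma)\in\Omega^{2n-1}(M)$ without comment, so your observation that $\Psi^{-1}=\Lambda_0^\#\circ\ast$ holds in all degrees (immediate from $\Psi\circ\Lambda_0^\#=\ast$ and $\ast\ast=\mathrm{Id}$) in fact plugs a small gap in the paper's presentation.
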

\begin{proof}
We have seen that $\Lambda$ is a Poisson tensor if and only if (\ref{damianou:cond-D-Lambda}) holds. But, in our case $\Lambda = \Lambda_0^\#(\sigma)$, so $\Lambda \wedge \Lambda = \Lambda_0^\#(\sigma \wedge \sigma)$, and $\Lambda_0^\#$ is an isomorphism. Therefore,
\begin{eqnarray*}
2 \Lambda \wedge D(\Lambda) = D(\Lambda \wedge \Lambda) & \Leftrightarrow & - 2 \Lambda \wedge \big((\Psi^{-1}\circ d \circ \Psi)(\Lambda) \big) = -(\Psi^{-1}\circ d \circ \Psi)(\Lambda \wedge \Lambda) \nonumber \\
& \stackrel{(\ref{damianou:Psi-sigma})}{\Leftrightarrow} & 2\Lambda_0^\#(\sigma) \wedge \big(\Psi^{-1}(d\,\ast \sigma) \big) = \Psi^{-1}\big(d\,\ast (\sigma \wedge \sigma)\big) \nonumber \\
& \stackrel{(\ref{damianou:Psi-1,ast})}{\Leftrightarrow} & 2\Lambda_0^\#(\sigma) \wedge \Lambda_0^\# (\ast \,d\,\ast (\sigma)) = \Lambda_0^\#(\ast \,d \, \ast (\sigma \wedge \sigma))\nonumber \\
& \Leftrightarrow & \Lambda_0^\# (2 \sigma \wedge \delta \sigma ) = \Lambda_0^\# \big(\delta(\sigma \wedge \sigma)\big) \nonumber \\
& \Leftrightarrow & 2\sigma \wedge \delta(\sigma) = \delta (\sigma \wedge \sigma),
\end{eqnarray*}
and we are done.
\end{proof}

\begin{remark}
{\rm Brylinski \cite{damianou:brl} observed that, when the manifold is symplectic, i.e., $d\omega_0 = 0$, $\delta$ is equal, up to sign, to $\Delta = i_{\Lambda_0}\circ d - d \circ i_{\Lambda_0}$. Then, in this framework, (\ref{damianou:cond-delta-sigma}) is equivalent to $\{\!\! \{ \sigma, \sigma\}\!\! \}_{_0} =0$,  ($\{\!\! \{ \cdot, \cdot \}\!\! \}_{_0}$ being the Koszul bracket (\ref{damianou:bracket-forms}) associated to $\Lambda_0$), which means that $\sigma$ is a complementary $2$-form on $(M,\Lambda_0)$ in the sense of Vaisman \cite{damianou:vai}.}
\end{remark}

\section{Poisson structures with prescribed Casimir functions}\label{damianou:section-theorem}
Let $M$ be a $m$-dimensional smooth manifold and $f_1,\ldots,f_{m-2k}$ smooth functions on $M$ which are functionally independent almost everywhere. We want to construct Poisson structures $\Lambda$ on $M$ with symplectic leaves of dimension at most $2k$ which have as Casimirs the given functions $f_1, f_2, \ldots, f_{m-2k}$. We start by discussing the problem on even-dimensional manifolds. In the next subsection we extend the results to  odd-dimensional manifolds.

\subsection{On even-dimensional manifolds}\label{section-even}
We suppose that $\dim M = 2n$ and we begin our study by remarking
\begin{lemma}
Given $(M,\,f_1,\ldots,f_{2n-2k})$, with $f_1,\ldots,f_{2n-2k}$ functionally independent almost everywhere on $M$, then there exists, at least locally, $\Lambda_0 \in \mathcal{V}^2(M)$, with $rank\,\Lambda_0=2n$, such that
\begin{equation*}
\big\langle df_1\wedge \ldots \wedge df_{2n-2k},\; \frac{\Lambda_0^{n-k}}{(n-k)!}\big\rangle \neq 0.
\end{equation*}
\end{lemma}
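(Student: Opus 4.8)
The plan is to construct $\Lambda_0$ directly in a suitable local coordinate system adapted to the functions $f_1,\ldots,f_{2n-2k}$. Since the $f_i$ are functionally independent almost everywhere, on a dense open set we may pick a point $x_0$ where $df_1\wedge\ldots\wedge df_{2n-2k}\neq 0$, and extend $f_1,\ldots,f_{2n-2k}$ to a local coordinate chart $(y^1,\ldots,y^{2k},f_1,\ldots,f_{2n-2k})$ on a neighborhood $U$ of $x_0$. In these coordinates $df_1\wedge\ldots\wedge df_{2n-2k}$ is (up to relabeling) one of the standard coordinate $(2n-2k)$-covectors, so the pairing in the statement becomes a question about a single coefficient of $\Lambda_0^{n-k}/(n-k)!$.

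Next I would simply write down an explicit nondegenerate bivector. Take
\begin{equation*}
\Lambda_0 \;=\; \sum_{a=1}^{k}\frac{\partial}{\partial y^{2a-1}}\wedge\frac{\partial}{\partial y^{2a}} \;+\; \sum_{b=1}^{n-k}\frac{\partial}{\partial u^{2b-1}}\wedge\frac{\partial}{\partial u^{2b}},
\end{equation*}
where I have renamed the $f$-coordinates $u^1=f_1,\ldots,u^{2n-2k}=f_{2n-2k}$ for symmetry of notation. This $\Lambda_0$ has full rank $2n$ on $U$, being the (inverse of the) standard symplectic form in these coordinates. A direct computation of the $(n-k)$-th wedge power picks out, among other terms, the term
\begin{equation*}
(n-k)!\;\frac{\partial}{\partial u^{1}}\wedge\frac{\partial}{\partial u^{2}}\wedge\ldots\wedge\frac{\partial}{\partial u^{2n-2k}},
\end{equation*}
so that $\big\langle df_1\wedge\ldots\wedge df_{2n-2k},\,\Lambda_0^{n-k}/(n-k)!\big\rangle = \big\langle du^1\wedge\ldots\wedge du^{2n-2k},\,\partial_{u^1}\wedge\ldots\wedge\partial_{u^{2n-2k}}\big\rangle = 1 \neq 0$ at $x_0$, hence on a neighborhood by continuity. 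One should check that no other term in the multinomial expansion of $\Lambda_0^{n-k}$ contributes a multiple of $\partial_{u^1}\wedge\ldots\wedge\partial_{u^{2n-2k}}$ — but since each factor $\partial_{y^{2a-1}}\wedge\partial_{y^{2a}}$ involves a $y$-direction that cannot appear in a pure $u$-multivector, the only surviving term is the one written, and the pairing is exactly $1$.

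There is essentially no hard step here; the statement is local and the construction is the standard Darboux-type model. The only mild subtlety is bookkeeping: making sure the chart extension of the $f_i$ is legitimate (this is the constant-rank / implicit function theorem applied on the open set where the $df_i$ are independent) and that the combinatorial coefficient in $\Lambda_0^{n-k}/(n-k)!$ comes out to $1$ rather than some other nonzero constant — either way the pairing is nonzero, which is all that is claimed. If one prefers a coordinate-free phrasing, one can instead invoke that any $2n$-manifold admits local almost symplectic forms and choose $\omega_0$ so that its associated $\Lambda_0$ pairs nontrivially with the given top form built from the $df_i$; but the explicit coordinate model above is the cleanest route.
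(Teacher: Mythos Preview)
Your proposal is correct and follows essentially the same approach as the paper: the paper completes $(df_1,\ldots,df_{2n-2k})$ to a local coframe by adjoining $1$-forms $\beta_1,\ldots,\beta_{2k}$, takes the dual frame $(Y_i,Z_j)$, and sets $\Lambda_0=\sum Y_{2i-1}\wedge Y_{2i}+\sum Z_{2j-1}\wedge Z_{2j}$, which is exactly your construction phrased with a frame rather than a coordinate chart (your $\partial_{u^i},\partial_{y^j}$ play the role of $Y_i,Z_j$). Both arguments compute the pairing to be $1$ by the same combinatorial observation about which term of $\Lambda_0^{n-k}/(n-k)!$ survives.
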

\begin{proof}
In fact, let $p \in M$ and $U$ an open neighborhood of $p$ such that $f_1,\ldots,f_{2n-2k}$ are functionally independent at each point $x\in U$. That means that $df_1\wedge \ldots \wedge df_{2n-2k}(x) \neq 0$ on $U$. We select $1$-forms $\beta_1,\ldots,\beta_{2k}$ on $U$ so that $(df_1, \ldots, df_{2n-2k},\beta_1,\ldots,\beta_{2k})$ become a basis of the cotangent space at each point. Let $(Y_1,\ldots,Y_{2n-2k},Z_1,\ldots,Z_{2k})$ be a family of vector fields on $U$ dual to $(df_1, \ldots, df_{2n-2k},\beta_1,\ldots,\beta_{2k})$. That is, they satisfy $df_i(Y_j) = \delta_{ij}$, $\beta_i(Z_j)=\delta_{ij}$, and all other pairings are zero. We consider the bivector field
\begin{equation*}
\Lambda_0 = \sum_{i=1}^{n-k}Y_{2i-1}\wedge Y_{2i} + \sum_{j=1}^kZ_{2j-1}\wedge Z_{2j}
\end{equation*}
which is of maximal rank on $U$. It is clear that
\begin{equation*}
\langle df_1\wedge \ldots \wedge df_{2n-2k}, \,\frac{\Lambda_0^{n-k}}{(n-k)!}\rangle = 1 \neq 0.
\end{equation*}
\end{proof}

\vspace{2mm}
Consider now $(M,\,f_1,\ldots,f_{2n-2k})$ and a nondegenerate bivector field $\Lambda_0$ on $M$ such that
\begin{equation}\label{damianou:f}
f = \big\langle df_1\wedge \ldots \wedge df_{2n-2k},\; \frac{\Lambda_0^{n-k}}{(n-k)!}\big\rangle =\big \langle \frac{\omega_0^{n-k}}{(n-k)!}, \;X_{f_1}\wedge \ldots \wedge X_{f_{2n-2k}}\big\rangle \neq 0
\end{equation}
on an open and dense subset $\mathcal{U}$ of $M$. In (\ref{damianou:f}), $\omega_0$ denotes the almost symplectic form on $M$ defined by $\Lambda_0$ and $X_{f_i} = \Lambda_0^\#(df_i)$ are the  hamiltonian  vector fields of $f_i$, $i = 1,\ldots, 2n-2k$, with respect to $\Lambda_0$. Let $D = \langle X_{f_1}, \ldots, X_{f_{2n-2k}}\rangle$ be the distribution on $M$ generated by $X_{f_i}$, $i=1,\ldots,2n-2k$, $D^\circ$ its annihilator, and $\mathrm{orth}_{\omega_0}D$ the symplectic orthogonal of $D$ with respect to $\omega_0$. Since $\det\big(\{f_i,f_j\}_{_0} \big) = f^2 \neq 0$ on $\mathcal{U}$, $D_x = D \cap T_xM$ is a symplectic subspace of $T_xM$ with respect to $\omega_{0_x}$ at each point $x\in \mathcal{U}$. Thus, $T_xM = D_x \oplus \mathrm{orth}_{\omega_{0_x}}D_x = D_x \oplus \Lambda_{0_x}^\#(D_x^\circ)$, where $D_x^\circ = D^\circ \cap T_x^\ast M$, and $T_x^\ast M = D_x^\circ \oplus (\Lambda_{0_x}^\#(D_x^\circ))^\circ = D_x^\circ \oplus \langle df_1,\ldots,df_{2n-2k}\rangle_x$. Finally, we denote by $\sigma$ the smooth $2$-form on $M$ which corresponds, via the isomorphism $\Lambda_0^\#$, to an element $\Lambda$ of $\mathcal{V}^2(M)$.

\begin{proposition}\label{damianou:prop-Lambda-sigma}
Under the above assumptions, a bivector field $\Lambda$ on $(M,\omega_0)$, of rank at most $2k$ on $M$, admits  as unique Casimirs the functions $f_1,\ldots, f_{2n-2k}$ if and only if its corresponding $2$-form $\sigma$ is a smooth section of $\bigwedge^2D^\circ$ of maximal rank on $\mathcal{U}$.
\end{proposition}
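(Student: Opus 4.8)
The plan is to observe first that the statement is \emph{fibrewise} in nature: whether a bivector field has rank $\le 2k$ at a point, and whether a given function is a Casimir of it, do not involve the Jacobi identity, so the proposition is essentially a statement in linear algebra along $M$, the clause ``unique Casimirs'' being the only place where a (mild) integrability discussion enters. The technical heart of the proof is the formula expressing the anchor of $\Lambda = \Lambda_0^{\#}(\sigma)$ through $\sigma$ and $\Lambda_0$. Writing $\sigma^{\flat}:\mathcal{V}^{1}(M)\to\Omega^{1}(M)$, $X\mapsto -i_{X}\sigma$, by analogy with $\omega_0^{\flat}=(\Lambda_0^{\#})^{-1}$, a direct computation from the defining identity $\Lambda(\alpha,\beta)=\sigma(\Lambda_0^{\#}\alpha,\Lambda_0^{\#}\beta)$ (the case $p=2$ of (\ref{damianou:def-extension})) and from $\omega_0^{\flat}(X)=-i_{X}\omega_0$ yields
\[
\Lambda^{\#}=\Lambda_0^{\#}\circ\sigma^{\flat}\circ\Lambda_0^{\#}.
\]
Since $\Lambda_0^{\#}$ is a fibrewise isomorphism, this gives at once $\mathrm{rank}\,\Lambda=\mathrm{rank}\,\sigma$ at every point and $\ker\Lambda^{\#}=\omega_0^{\flat}(\ker\sigma^{\flat})$.

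From the displayed formula, $g\in\C$ is a Casimir of $\Lambda$ exactly when $\Lambda_0^{\#}(dg)\in\ker\sigma^{\flat}$, i.e. when $i_{\Lambda_0^{\#}(dg)}\sigma=0$. Applying this to $g=f_1,\dots,f_{2n-2k}$, whose $\Lambda_0$-hamiltonian fields $X_{f_i}=\Lambda_0^{\#}(df_i)$ span the distribution $D$, I conclude that \emph{all} the $f_i$ are Casimirs of $\Lambda$ if and only if $i_{X}\sigma=0$ for every $X\in D$, which in turn is equivalent to $\sigma$ being a section of $\bigwedge^{2}D^{\circ}$; this last equivalence is the elementary fact, proved in a local frame adapted to $TM=D\oplus\mathrm{orth}_{\omega_0}D$, that a $2$-form annihilated by every vector of a subbundle lies in the second exterior power of its annihilator. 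Here one uses that $D$ has constant rank $2n-2k$ on $\mathcal{U}$, which is precisely $\det(\{f_i,f_j\}_0)=f^{2}\neq0$. This already settles both implications for the part of the claim asserting that $f_1,\dots,f_{2n-2k}$ are Casimirs.

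Assume now $\sigma\in\Gamma(\bigwedge^{2}D^{\circ})$. Then $\sigma^{\flat}$ has image in $D^{\circ}$ and kernel containing $D$, hence descends to a bundle morphism $\overline{\sigma^{\flat}}:TM/D\to D^{\circ}$ between bundles of equal rank $2k$ over $\mathcal{U}$, and $\mathrm{rank}\,\sigma=\mathrm{rank}\,\overline{\sigma^{\flat}}\le2k$ everywhere; by the first paragraph $\mathrm{rank}\,\Lambda\le2k$ then holds automatically, so the rank hypothesis imposes nothing beyond $\sigma\in\Gamma(\bigwedge^{2}D^{\circ})$. It remains to match ``$f_1,\dots,f_{2n-2k}$ are the \emph{unique} Casimirs'' with a maximality condition on $\sigma$. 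On $\mathcal{U}$ one has $\ker\Lambda^{\#}\supseteq\langle df_1,\dots,df_{2n-2k}\rangle=\omega_0^{\flat}(D)$, with equality if and only if $\ker\sigma^{\flat}=D$, i.e. if and only if $\overline{\sigma^{\flat}}$ is injective, i.e. (equal ranks) an isomorphism, i.e. $\sigma$ has rank $2k$ --- maximal rank. So the proposition reduces to the assertion that $\Lambda$ has $f_1,\dots,f_{2n-2k}$ as its only Casimirs precisely when $\ker\Lambda^{\#}$ coincides with $\langle df_1,\dots,df_{2n-2k}\rangle$ on a dense subset of $\mathcal{U}$.

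I expect this last equivalence to be the main obstacle. One direction is routine: if $\ker\Lambda^{\#}=\langle df_i\rangle$ on a dense set, then $dg\in\langle df_i\rangle$ there, hence everywhere by continuity, for every Casimir $g$, and on each connected piece of the open dense locus where the $f_i$ are independent $g$ is a local function of $f_1,\dots,f_{2n-2k}$. The converse requires ruling out that $\ker\Lambda^{\#}$ is strictly larger than $\langle df_i\rangle$ on an open subset of $\mathcal{U}$: since the rank of a bivector field is lower semicontinuous, one may then pass to a nonempty open set on which $\mathrm{rank}\,\Lambda$ is constant and $<2k$, so that $\ker\Lambda^{\#}$ is a constant-rank distribution properly containing the integrable subbundle $\langle df_1,\dots,df_{2n-2k}\rangle$; using that in the intended applications $\Lambda$ is a genuine Poisson tensor --- so that $\ker\Lambda^{\#}$ is the conormal bundle of the symplectic foliation, hence integrable with locally exact generators --- one produces more than $2n-2k$ functionally independent local Casimirs, contradicting uniqueness. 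This genericity and integrability step is also the reason one works on the open dense set $\mathcal{U}$ and not on all of $M$. Granting it, the proposition follows by stringing together the equivalences of the preceding paragraphs.
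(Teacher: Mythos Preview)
Your proposal is correct and follows the same line as the paper's (very brief) proof: both hinge on the equivalence $\Lambda^\#(df_i)=0 \Leftrightarrow i_{X_{f_i}}\sigma=0$, obtained from $\Lambda^\# = \Lambda_0^\#\circ\sigma^\flat\circ\Lambda_0^\#$, and then identify ``unique Casimirs on $\mathcal{U}$'' with $\ker\sigma^\flat = D$, i.e.\ with $\sigma$ being a section of $\bigwedge^2 D^\circ$ of maximal rank. Your treatment is in fact more scrupulous than the paper's about the passage from the pointwise condition $\ker\Lambda^\# = \langle df_i\rangle$ to the nonexistence of further Casimir \emph{functions} (invoking lower semicontinuity of the rank and integrability of $\ker\Lambda^\#$ in the Poisson case), a step the paper treats as a tautology.
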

\begin{proof}
Effectively, for any $f_i$, $i=1,\ldots, 2n-2k$,
\begin{equation}\label{damianou:prop-Lambda-sigma-fi}
\Lambda(df_i, \cdot) = 0 \Leftrightarrow \Lambda_0^\#(\sigma)(df_i, \cdot) = 0 \Leftrightarrow \sigma (X_{f_i}, \Lambda_0^\#(\cdot)) = 0.
\end{equation}
Thus, $f_1,\ldots,f_{2n-2k}$ are the unique Casimir functions of $\Lambda$ on $\mathcal{U}$ if and only if the vector fields $X_{f_1},\ldots,X_{f_{2n-2k}}$ with functionally independent hamiltonians on $\mathcal{U}$ generate $\ker \sigma$, i.e., for any $x\in \mathcal{U}$, $D_x = \ker \sigma_x^\flat$. The last relation  means that $\sigma$ is a section of $\bigwedge^2D^\circ$ of maximal rank on $\mathcal{U}$.
\end{proof}

\vspace{3mm}

Still using  the same  notation, we can formulate the following main theorem.

\begin{theorem}\label{damianou:THEOREM}
Let $f_1,\ldots,f_{2n-2k}$ be smooth functions on a $2n$-dimensional differentiable manifold $M$ which are functionally independent almost everywhere, $\omega_0$ an almost symplectic structure on $M$ such that (\ref{damianou:f}) holds on an open and dense subset $\mathcal{U}$ of $M$, $\Omega = \displaystyle{\frac{\omega_0^n}{n!}}$ the corresponding volume form on $M$, and $\sigma$ a section of $\bigwedge^2 D^{\circ}$ of maximal rank on $\mathcal{U}$ that satisfies (\ref{damianou:cond-delta-sigma}). Then, the $(2n-2)$-form
\begin{equation}\label{damianou:expression-Phi}
\Phi = - \frac{1}{f}(\sigma + \frac{g}{k-1}\omega_0)\wedge \frac{\omega_0^{k-2}}{(k-2)!}\wedge df_1\wedge\ldots \wedge df_{2n-2k},
\end{equation}
where $f$ is given by (\ref{damianou:f}) and $g = i_{\Lambda_0}\sigma$, corresponds, via the isomorphism $\Psi^{-1}$, to a Poisson tensor $\Lambda$ with orbits of dimension at most $2k$ for which $f_1,\ldots,f_{2n-2k}$ are Casimirs. Precisely, $\Lambda = \Lambda_0^\#(\sigma)$ and the associated bracket of $\Lambda$ on $C^{\infty}(M)$ is given, for any $h_1,h_2 \in C^{\infty}(M)$, by
\begin{equation}\label{damianou:bracket-Lambda-Omega}
\{h_1,h_2\} \Omega =  -\frac{1}{f} dh_1 \wedge dh_2 \wedge (\sigma + \frac{g}{k-1}\omega_0) \wedge \frac{\omega_0^{k-2}}{(k-2)!}\wedge df_1\wedge\ldots \wedge df_{2n-2k}.
\end{equation}
Conversely, if $\Lambda \in \mathcal{V}^2(M)$ is a Poisson tensor of rank $2k$ on an open and dense subset $\mathcal{U}$ of $M$, then there are $2n-2k$ functionally independent smooth functions $f_1,\ldots,f_{2n-2k}$ on $\mathcal{U}$ and a section $\sigma$ of $\bigwedge^2 D^{\circ}$ of maximal rank on $\mathcal{U}$ satisfying (\ref{damianou:cond-delta-sigma}), such that $\Psi_{\Lambda}$ and $\{\cdot,\cdot\}$ are given, respectively, by (\ref{damianou:expression-Phi}) and (\ref{damianou:bracket-Lambda-Omega}).
\end{theorem}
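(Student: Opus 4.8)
The plan is to concentrate everything on the single identity $\ast\Phi=\sigma$ on $\mathcal{U}$ and to obtain the remaining assertions from Section~2. Put $\Lambda:=\Lambda_0^{\#}(\sigma)$. Since $\sigma$ satisfies (\ref{damianou:cond-delta-sigma}), Proposition~\ref{damianou:theorem-cond-delta-sigma} makes $\Lambda$ a Poisson tensor; since $\Lambda_0^{\#}$ is an isomorphism with $\Lambda_0^{\#}(\sigma^{k+1})=\Lambda^{k+1}$ while $\sigma$, being a section of $\bigwedge^2 D^\circ$, satisfies $\sigma^{k+1}=0$, the bivector $\Lambda$ has rank at most $2k$, i.e. symplectic leaves of dimension $\leq 2k$; and since $\sigma$ has maximal rank on $\mathcal{U}$, Proposition~\ref{damianou:prop-Lambda-sigma} shows that $f_1,\dots,f_{2n-2k}$ are exactly the Casimirs of $\Lambda$. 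It remains to prove $\Psi^{-1}(\Phi)=\Lambda$ and to read off the bracket. By (\ref{damianou:Psi-1,ast}), $\Psi^{-1}(\Phi)=\Lambda_0^{\#}(\ast\Phi)$, so, since $\Lambda_0^{\#}$ is injective and $\ast\ast=\mathrm{Id}$, the whole content of the direct part reduces to the equality $\Phi=\ast\sigma$; it suffices to verify this on $\mathcal{U}$, as $\Psi_\Lambda$ is a globally smooth form and $\mathcal{U}$ is dense. Granting $\Phi=\ast\sigma$, one has $\Psi_\Lambda=\Phi$, and (\ref{damianou:bracket-Lambda-Omega}) follows from the general identity $\{h_1,h_2\}\,\Omega=dh_1\wedge dh_2\wedge\Psi_\Lambda$, obtained by pairing both sides with $\tilde\Omega$ and using $\langle\zeta\wedge\eta,P\rangle=\langle\zeta,j_\eta P\rangle$ and $\Psi^{-1}(\Psi_\Lambda)=\Lambda$.

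To establish $\Phi=\ast\sigma$ I would first compute $\ast\sigma$ from Lepage's decomposition theorem. If $\sigma=\psi_2+\psi_0\,\omega_0$ is its Lepage decomposition ($\psi_2$ effective, $\psi_0\in C^{\infty}(M)$), then $g=i_{\Lambda_0}\sigma=\psi_0\,i_{\Lambda_0}\omega_0=-n\psi_0$, hence $\psi_0=-g/n$, and the adjoint formula of that theorem reduces, after substituting $\psi_2=\sigma+\frac{g}{n}\omega_0$, to
\begin{equation*}
\ast\sigma=-\Big(\sigma+\frac{g}{n-1}\,\omega_0\Big)\wedge\frac{\omega_0^{n-2}}{(n-2)!}.
\end{equation*}
This has the shape of $\Phi$ but with $n$ in place of $k$ and without the factor $df_1\wedge\cdots\wedge df_{2n-2k}$; the bridge is the local product structure described just before Proposition~\ref{damianou:prop-Lambda-sigma}. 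On $\mathcal{U}$, from $T_xM=D_x\oplus\mathrm{orth}_{\omega_0}D_x$ and $T_x^\ast M=\langle df_1,\dots,df_{2n-2k}\rangle_x\oplus D_x^\circ$, write $\omega_0=\omega_D+\omega_o$ with $\omega_D\in\bigwedge^2\langle df_i\rangle$ of rank $2(n-k)$ and $\omega_o\in\bigwedge^2 D^\circ$ of rank $2k$; since $\sigma$ is a section of $\bigwedge^2 D^\circ$, it lies in the $\omega_o$-sector, so $i_{\Lambda_0}\sigma=i_{\Lambda_o}\sigma=g$, $\Lambda_o$ being the bivector of $\omega_o$. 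Two facts are needed: (i) $\frac{\omega_D^{n-k}}{(n-k)!}=\frac{1}{f}\,df_1\wedge\cdots\wedge df_{2n-2k}$, the Pfaffian reading of $\det(\{f_i,f_j\}_0)=f^2$; and (ii) the primitivity relation $\bar\sigma\wedge\frac{\omega_o^{k-1}}{(k-1)!}=0$ for the $\omega_o$-effective part $\bar\sigma$ of $\sigma$ (so that $\sigma=\bar\sigma-\frac{g}{k}\omega_o$), equivalently $\sigma\wedge\frac{\omega_o^{k-1}}{(k-1)!}=-g\,\frac{\omega_o^{k}}{k!}$. I would prove (ii) by noting that its left side is a top form on the $2k$-dimensional factor, hence vanishes iff it pairs to $0$ with $\frac{\Lambda_o^{k}}{k!}$; and, by $\langle\zeta\wedge\eta,P\rangle=\langle\zeta,j_\eta P\rangle$, by (\ref{damianou:Psi-1,ast}) applied to that factor, and by $\ast\frac{\omega_o^{k-1}}{(k-1)!}=\omega_o$, this pairing equals $\langle\bar\sigma,\Lambda_o^{\#}(\omega_o)\rangle=\langle\bar\sigma,\Lambda_o\rangle=-i_{\Lambda_o}\bar\sigma=0$.

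With (i) and (ii) the two sides become directly comparable. Inserting (i) into $\Phi$ and discarding the $\omega_D$-terms killed by the full $df_1\wedge\cdots\wedge df_{2n-2k}$ (because $\omega_D^{n-k+1}=0$) turns $\Phi$ into $-\big(\sigma\wedge\frac{\omega_o^{k-2}}{(k-2)!}+g\,\frac{\omega_o^{k-1}}{(k-1)!}\big)\wedge\frac{\omega_D^{n-k}}{(n-k)!}$; on the other hand, expanding $\omega_0^{n-2}$ and $\omega_0^{n-1}$ in the $(D,\mathrm{orth})$-bidegree in the formula for $\ast\sigma$ produces exactly these two terms plus $\frac{\omega_D^{n-k-1}}{(n-k-1)!}\wedge\big(\sigma\wedge\frac{\omega_o^{k-1}}{(k-1)!}+g\,\frac{\omega_o^{k}}{k!}\big)=\frac{\omega_D^{n-k-1}}{(n-k-1)!}\wedge\bar\sigma\wedge\frac{\omega_o^{k-1}}{(k-1)!}$, which is $0$ by (ii). Hence $\Phi=\ast\sigma$, completing the direct part. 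For the converse, given a Poisson tensor $\Lambda$ of rank $2k$ on the open dense $\mathcal{U}$, its symplectic foliation is regular there of codimension $2n-2k$, so (locally on $\mathcal{U}$) its first integrals furnish $2n-2k$ functionally independent Casimirs $f_1,\dots,f_{2n-2k}$; choosing a nondegenerate $\Lambda_0$ (hence $\omega_0$ and $\Omega=\omega_0^n/n!$) so that (\ref{damianou:f}) holds and setting $\sigma:=(\Lambda_0^{\#})^{-1}(\Lambda)$, Propositions~\ref{damianou:prop-Lambda-sigma} and \ref{damianou:theorem-cond-delta-sigma} force $\sigma$ to be a section of $\bigwedge^2 D^\circ$ of maximal rank on $\mathcal{U}$ obeying (\ref{damianou:cond-delta-sigma}), and the direct part delivers (\ref{damianou:expression-Phi}) and (\ref{damianou:bracket-Lambda-Omega}). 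I expect the main obstacle to be the $(D,\mathrm{orth})$-bidegree bookkeeping together with keeping every sign convention straight (notably $i_{\Lambda_0}\omega_0=-n$); the only non-formal ingredient, the primitivity identity (ii), is disposed of by the short pairing argument above.
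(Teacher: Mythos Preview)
Your argument is correct, and the reduction to the single identity $\Phi=\ast\sigma$ on $\mathcal{U}$ is exactly the right organizing principle. However, the route you take to that identity is genuinely different from the paper's.

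The paper computes $\ast\Phi$ rather than $\ast\sigma$: it first uses property (\ref{damianou:property-ii}) to peel off the factor $df_1\wedge\cdots\wedge df_{2n-2k}$ as an interior product by $X_{f_1}\wedge\cdots\wedge X_{f_{2n-2k}}$, then applies Lepage's decomposition to the remaining piece $(\sigma+\frac{g}{k-1}\omega_0)\wedge\frac{\omega_0^{k-2}}{(k-2)!}$ to obtain $-\sigma\wedge\frac{\omega_0^{n-k}}{(n-k)!}$, and finally uses $i_{X_{f_i}}\sigma=0$ together with the definition of $f$ to collapse the interior product to $\sigma$. The converse is handled by a separate computation, writing $\Omega=\frac{1}{f}\frac{\omega_0^k}{k!}\wedge df_1\wedge\cdots\wedge df_{2n-2k}$ and evaluating $i_\Lambda\frac{\omega_0^k}{k!}$ via the same Lepage identity read backwards.

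Your approach instead computes $\ast\sigma$ globally via Lepage and then matches it against $\Phi$ by exploiting the symplectic splitting $\omega_0=\omega_D+\omega_o$ along $T_xM=D_x\oplus\mathrm{orth}_{\omega_0}D_x$, together with the Pfaffian identity (i) and the primitivity relation (ii). This buys you two things: the converse comes for free (once $\ast\sigma=\Phi$ is established it is symmetric in the two directions), and the argument is pure linear algebra on the product structure, with no further use of (\ref{damianou:property-ii}). The price is the bidegree bookkeeping and the need to justify (i) and (ii) separately; the paper's route avoids this by never splitting $\omega_0$, at the cost of running the Lepage computation twice. Both arguments are complete; yours is arguably more transparent about why the factor $\frac{1}{f}\,df_1\wedge\cdots\wedge df_{2n-2k}$ appears, while the paper's is shorter and stays within the $\ast$-calculus of Section~2 without introducing auxiliary objects like $\omega_D,\omega_o$.
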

\begin{proof}
We denote by $\tilde{\Omega}= \displaystyle{\frac{\Lambda_0^n}{n!}}$ the dual $2n$-vector field of $\Omega$ on $M$ and we set $\Lambda = j_{\Phi}\tilde{\Omega}$. For any $f_i$, $i = 1,\ldots,2n-2k$, we have
\begin{equation*}
\Lambda^\#(df_i) =- j_{df_i}\Lambda = -j_{df_i}j_{\Phi}\tilde{\Omega} = -j_{df_i\wedge \Phi}\tilde{\Omega} = -j_0\tilde{\Omega} = 0,
\end{equation*}
which means that $f_1,\ldots, f_{2n-2k}$ are Casimir functions of $\Lambda$. We shall see that $\Lambda = \Lambda_0^\#(\sigma)$. Thus, $\Lambda$ will define a Poisson structure on $M$ having the required properties. We calculate the adjoint form $\ast\, \Phi$ of $\Phi$ relative to $\omega_0$:
\begin{eqnarray*}
\ast\,\Phi & = & - \frac{1}{f} \ast\big((\sigma + \frac{g}{k-1}\omega_0)\wedge \frac{\omega_0^{k-2}}{(k-2)!}\wedge df_1 \ldots \wedge df_{2n-2k}\big) \nonumber \\
& \stackrel{(\ref{damianou:property-ii})}{=} & - (-1)^{(2n-2k-1)(2n-2k)/2}\,\frac{1}{f}i_{X_{f_1}\wedge \ldots \wedge X_{f_{2n-2k}}} \big[\ast \big((\sigma + \frac{g}{k-1}\omega_0)\wedge \frac{\omega_0^{k-2}}{(k-2)!}\big) \big].
\end{eqnarray*}
But, from Lepage's decomposition theorem, $\sigma$ can be written as $\sigma = \psi_2 + \psi_0\omega_0$, where $\psi_2$ is an effective $2$-form on $M$ with respect to $\Lambda_0$ and $\psi_0 = \displaystyle{\frac{i_{\Lambda_0}\sigma}{i_{\Lambda_0}\omega_0} = -\frac{g}{n}}$. (It is easy to check that $i_{\Lambda_0}\omega_0 = - \langle \omega_0,\Lambda_0\rangle = -\displaystyle{ \frac{Tr(\omega_0^\flat \circ \Lambda_0^\#)}{2}}= - \frac{Tr(I_{2n})}{2}=-n.$) Hence,
\begin{eqnarray*}
(\sigma +\frac{g}{k-1}\omega_0)\wedge \frac{\omega_0^{k-2}}{(k-2)!} & = & (\psi_2 - \frac{g}{n}\omega_0 + \frac{g}{k-1}\omega_0)\wedge \frac{\omega_0^{k-2}}{(k-2)!} \\
& = & \psi_2\wedge \frac{\omega_0^{k-2}}{(k-2)!} + \frac{n-k+1}{n}g\frac{\omega_0^{k-1}}{(k-1)!}
\end{eqnarray*}
and
\begin{eqnarray}\label{damianou:ast-sigma-k-1}
\ast \big((\sigma + \frac{g}{k-1}\omega_0)\wedge \frac{\omega_0^{k-2}}{(k-2)!}\big) & = & \ast\, \big(\psi_2\wedge \frac{\omega_0^{k-2}}{(k-2)!}\big) + \frac{n-k+1}{n}g \big(\ast\,\frac{\omega_0^{k-1}}{(k-1)!}\big) \nonumber \\
& \stackrel{(\ref{damianou:adjoint-simple})}{=} & - \psi_2 \wedge \frac{\omega_0^{n-(k-2)-2}}{(n-(k-2)-2)!} + \frac{n-k+1}{n}g \frac{\omega_0^{n-(k-1)}}{(n-(k-1))!} \nonumber \\
& =& - (\psi_2 - \frac{g}{n}\omega_0)\wedge \frac{\omega_0^{n-k}}{(n-k)!}\, = \,-\sigma \wedge \frac{\omega_0^{n-k}}{(n-k)!}.
\end{eqnarray}
Consequently,
\begin{eqnarray}\label{damianou:ast-Phi-sigma}
\ast\, \Phi & = & - (-1)^{(2n-2k-1)(2n-2k)/2}\,\frac{1}{f}i_{X_{f_1}\wedge \ldots \wedge X_{f_{2n-2k}}} \big[ -\sigma \wedge \frac{\omega_0^{n-k}}{(n-k)!}\big] \nonumber \\
& \stackrel{(\ref{damianou:sign-inter. prod})(\ref{damianou:prop-Lambda-sigma-fi})}{=} & \frac{1}{f} \big \langle \frac{\omega_0^{n-k}}{(n-k)!},\, X_{f_1}\wedge \ldots \wedge X_{f_{2n-2k}}\big \rangle \, \sigma \,= \frac{1}{f}f\,\sigma = \sigma.
\end{eqnarray}
By applying (\ref{damianou:Psi-1,ast}) to the above relation, we obtain
\begin{equation*}
\Lambda_0^\#(\sigma) = \Lambda_0^\#(\ast\,\Phi) = \Psi^{-1}(\Phi) = j_\Phi \tilde{\Omega} = \Lambda.
\end{equation*}
Thus, according to Proposition \ref{damianou:theorem-cond-delta-sigma}, $\Lambda$ defines a Poisson structure on $M$, with orbits of dimension at most $2k$, for which $f_1, \ldots, f_{2n-2k}$ are Casimir functions. Obviously, the associated bracket of $\Lambda$ on $C^{\infty}(M)$ is given by (\ref{damianou:bracket-Lambda-Omega}). For any $h_1,h_2 \in C^{\infty}(M)$,
\begin{eqnarray*}
\{h_1,h_2\}& = & j_{dh_1 \wedge dh_2}\Lambda = j_{dh_1 \wedge dh_2}j_\Phi \tilde{\Omega} = j_{dh_1\wedge dh_2 \wedge \Phi}\tilde{\Omega} \;\;\; \Leftrightarrow \nonumber \\
\{h_1,h_2\}\Omega &= & -\frac{1}{f}dh_1\wedge dh_2 \wedge (\sigma + \frac{g}{k-1}\omega_0) \wedge \frac{\omega_0^{k-2}}{(k-2)!}\wedge df_1\wedge\ldots \wedge df_{2n-2k}.
\end{eqnarray*}

Conversely, if $\Lambda$ is a Poisson tensor on $M$ with symplectic leaves of dimension at most $2k$, then in a neighborhood $U$ of a nonsingular point there are coordinates $(z_1,\ldots,z_{2k},f_1,\ldots,f_{2n-2k})$ such that the symplectic leaves of $\Lambda$ are defined by $f_l = const.$, $l= 1,\ldots, 2n-2k$. Let $\Lambda_0$ be a nondegenerate bivector field on $U$ such that $f=\langle df_1\wedge \ldots \wedge df_{2n-2k},\: \displaystyle{\frac{\Lambda_0^{n-k}}{(n-k)!}} \rangle \neq 0$ on $U$ and $\sigma$ the $2$-form on $U$ which corresponds, via the isomorphism $\Lambda_0^\#$, to $\Lambda$. As we did earlier, we construct the distribution $D$ on $U$ and its annihilator $D^\circ$. According to Propositions \ref{damianou:prop-Lambda-sigma} and \ref{damianou:theorem-cond-delta-sigma}, $\sigma$ is a section of $\bigwedge^2 D^\circ$ of maximal rank on $U$ satisfying (\ref{damianou:cond-delta-sigma}). We will prove that the $(2n-2)$-form $\Psi_{\Lambda} = -i_{\Lambda_0^\#(\sigma)}\Omega = \ast\,\sigma$, where $\Omega = \displaystyle{\frac{\omega_0^n}{n!}}$ is the volume element on $U$ defined by the almost symplectic form $\omega_0$, the inverse of $\Lambda_0$, can be written in the form (\ref{damianou:expression-Phi}).

Since (\ref{damianou:f}) holds on $U$, $\Omega$ can be written on $U$ as
\begin{equation*}
\Omega = \frac{1}{f}\frac{\omega_0^k}{k!}\wedge df_1\wedge \ldots \wedge df_{2n-2k}
\end{equation*}
and
\begin{equation}\label{damianou:eq-Lambda-fi}
\Psi_{\Lambda} =  - i_{\Lambda}\Omega = - \frac{1}{f}\big(i_{\Lambda} \frac{\omega_0^k}{k!}\big) \wedge df_1 \wedge \ldots \wedge df_{2n-2k}.
\end{equation}
We now proceed  to calculate the $(2k-2)$-form $\displaystyle{-i_{\Lambda}\frac{\omega_0^k}{k!}}$. We remark that $\displaystyle{\frac{\omega_0^k}{k!} = \ast \, \frac{\omega_0^{n-k}}{(n-k)!}}$. So, from (\ref{damianou:property-ii}) we get that
\begin{equation}\label{damianou:eq-Lambda-k}
-i_{\Lambda}\frac{\omega_0^k}{k!} = \ast \, (\sigma \wedge \frac{\omega_0^{n-k}}{(n-k)!}).
\end{equation}
Repeating  the calculation of (\ref{damianou:ast-sigma-k-1}) in the inverse direction, we have
\begin{equation}\label{damianou:ast-sigma-k}
\ast \, (\sigma \wedge \frac{\omega_0^{n-k}}{(n-k)!}) = - \ast \ast \big((\sigma + \frac{g}{k-1}\omega_0)\wedge \frac{\omega_0^{k-2}}{(k-2)!}\big) = - (\sigma + \frac{g}{k-1}\omega_0)\wedge \frac{\omega_0^{k-2}}{(k-2)!}.
\end{equation}
Therefore, by replacing (\ref{damianou:ast-sigma-k}) in (\ref{damianou:eq-Lambda-k}) and the obtained relation in (\ref{damianou:eq-Lambda-fi}), we prove that $\Psi_{\Lambda}$ is given by   the expression (\ref{damianou:expression-Phi}). Then, it is clear  that $\{\cdot,\cdot\}$ is given by (\ref{damianou:bracket-Lambda-Omega}).
\end{proof}

\vspace{3mm}

\noindent
\textbf{The case of almost Poisson structures with prescribed kernel.} Theorem \ref{damianou:THEOREM} can be generalized by replacing the exact $1$-forms $df_1,\ldots,df_{2n-2k}$ with $1$-forms $\alpha_1,\ldots,\alpha_{2n-2k}$ which are linearly independent at each point of an open and dense subset of $M$. It suffices to consider a nondegenerate almost Poisson structure $\Lambda_0$ on $M$ such that
\begin{equation*}
f = \big\langle \alpha_1\wedge \ldots \wedge \alpha_{2n-2k},\; \frac{\Lambda_0^{n-k}}{(n-k)!}\big\rangle \neq 0
\end{equation*}
holds on an open and dense subset $\mathcal{U}$ of $M$ and to construct the distribution $D = \langle X_{\alpha_1},\ldots,X_{\alpha_{2n-2k}}\rangle$, $X_{\alpha_i} = \Lambda_0^\#(\alpha_i)$, and its annihilator $D^\circ$. Then, to each section $\sigma$ of $\bigwedge^2 D^\circ$ of maximal rank on $\mathcal{U}$ corresponds an almost Poisson structure $\Lambda \in \mathcal{V}^2(M)$ of rank at most $2k$ whose  kernel coincides with the space $\langle \alpha_1,\ldots,\alpha_{2n-2k}\rangle$ almost everywhere on $M$ and its associated bracket on $\C$ is given by
\begin{equation}\label{br-almostPoisson}
\{h_1,h_2\} \Omega = - \frac{1}{f}dh_1\wedge dh_2\wedge(\sigma + \frac{g}{k-1}\omega_0)\wedge \frac{\omega_0^{k-2}}{(k-2)!}\wedge \alpha_1\wedge\ldots \wedge \alpha_{2n-2k},
\end{equation}
$\omega_0$ being the almost symplectic structure on $M$ defined by $\Lambda_0$, $g = i_{\Lambda_0}\sigma$ and $\Omega = \displaystyle{\frac{\omega_0^n}{n!}}$.

\subsection{On odd-dimensional manifolds}
Let $M$ be a $(2n+1)$-dimensional manifold. We remark that any Poisson tensor $\Lambda$ on $M$ admitting  $f_1,\ldots,f_{2n+1-2k} \in \C$ as Casimir functions can be viewed as a  Poisson tensor on $M'=M\times \R$ admitting  $f_1,\ldots,f_{2n+1-2k}$ and $f_{2n+2-2k}(x,s)=s$   ( $s$ being the canonical coordinate on the factor $\R$) as Casimir functions, and conversely. Thus, the problem of construction of Poisson brackets on $\C$ having as  center the space of functions generated by $(f_1,\ldots,f_{2n+1-2k})$ is equivalent to that of construction of Poisson brackets on $C^\infty(M')$ having as  center the space of functions generated by $(f_1,\ldots,f_{2n+1-2k}, s)$, a setting which was completely studied in subsection \ref{section-even}. In what follows, using the results of 3.1., we establish a formula analogous to (\ref{damianou:bracket-Lambda-Omega}) for Poisson brackets on odd-dimensional manifolds. But, before we proceed, let us recall the notion of \emph{almost cosymplectic} structures on $M$ and some of their properties \cite{lib1, lch2}.

\vspace{2mm}

An \emph{almost cosymplectic} structure on a smooth manifold $M$, with $\dim M = 2n+1$, is defined by a pair $(\vartheta_0,\Theta_0)\in \Omega^1(M)\times \Omega^2(M)$ such that $\vartheta_0 \wedge \Theta_0^n \neq 0$ everywhere on $M$. The last condition means that $\vartheta_0 \wedge \Theta_0^n$ is a volume form on $M$ and that $\Theta_0$ is of constant rank $2n$ on $M$. Thus, $\ker \vartheta_0$ and $\ker \Theta_0$ are complementary subbundles of $TM$ called, respectively, the \emph{horizontal bundle} and the \emph{vertical bundle}. Of course, their annihilators are complementery subbundles of $T^\ast M$. Moreover, it is well known \cite{lch2} that $(\vartheta_0,\Theta_0)$ gives rice to a transitive \emph{almost Jacobi} structure $(\Lambda_0,E_0) \in \mathcal{V}^2(M)\times \mathcal{V}^1(M)$ on $M$ such that
\begin{equation*}
i(E_0)\vartheta_0 = 1 \quad  \mathrm{and}  \quad i(E_0)\Theta_0 = 0,
\end{equation*}
\begin{equation*}
\Lambda_0^\#(\vartheta_0) = 0 \quad \mathrm{and} \quad i(\Lambda_0^\#(\zeta)\Theta_0) = -(\zeta - \langle \zeta, E_0\rangle \vartheta_0), \quad \mathrm{for}\;\;\mathrm{all}\;\; \zeta \in \Omega^1(M).
\end{equation*}
We have, $\ker \vartheta_0 = \mathrm{Im}\Lambda_0^\#$ and $\ker \Theta_0 = \langle E_0\rangle$. So, $TM =\mathrm{Im}\Lambda_0^\# \oplus \langle E_0\rangle$ and $T^\ast M = \langle E_0\rangle^\circ \oplus \langle \vartheta_0\rangle$. The sections of $\langle E_0\rangle^\circ$ are called \emph{semi-basic} forms and $\Lambda_0^\#$ is an isomorphism from the $\C$-module of semi-basic $1$-forms to  the $\C$-module of horizontal vector fields. This isomorphism can be  extended, as in (\ref{damianou:def-extension}), to an isomorphism, also denoted by $\Lambda_0^\#$, from the $\C$-module of semi-basic $p$-forms on the $\C$-module of horizontal $p$-vector fields. Finally, we note that $(\vartheta_0,\Theta_0)$ determines on $M'=M\times \R$ an almost symplectic structure $\omega'_0 = \Theta_0 + ds \wedge \vartheta_0$ whose  corresponding nondegenerate almost Poisson tensor is $\Lambda'_0 = \Lambda_0 + \displaystyle{\frac{\partial}{\partial s}}\wedge E_0$.

\vspace{2mm}

Now, we consider $(M, f_1,\ldots,f_{2n+1-2k})$, with $f_1,\ldots,f_{2n+1-2k}$ functionally independent almost everywhere on $M$, and an almost cosymplectic structure $(\vartheta_0,\Theta_0)$ on $M$ whose associated nondegenerate almost Jacobi structure $(\Lambda_0, E_0)$ verifies the condition
\begin{equation}\label{f-odd}
f = \langle df_1\wedge \ldots \wedge df_{2n+1-2k},\; E_0\wedge \frac{\Lambda_0^{n-k}}{(n-k)!}\rangle \neq 0
\end{equation}
on an open and dense subset $\mathcal{U}$ of $M$.\footnote{As in the case of even-dimensional manifolds,  such a structure $(\Lambda_0,E_0)$ always exists at least locally.} Let $\omega_0'= \Theta_0 + ds \wedge \vartheta_0$ and $\Lambda'_0 = \Lambda_0 + \displaystyle{\frac{\vartheta}{\vartheta s}}\wedge E_0$ be, respectively, the associated almost symplectic and almost Poisson structure on $M'=M\times \R$. Since, for any $m = 1,\ldots,n+1$,
\begin{equation}\label{volume'}
\frac{\omega_0'\,^m}{m!} = \frac{\Theta_0^m}{m!} + ds\wedge \vartheta_0 \wedge \frac{\Theta_0^{m-1}}{(m-1)!} \quad \mathrm{and} \quad \frac{\Lambda_0'\,^m}{m!} = \frac{\Lambda_0^m}{m!} + \frac{\partial}{\partial s}\wedge E_0 \wedge \frac{\Lambda_0^{m-1}}{(m-1)!},
\end{equation}
it is clear that
\begin{eqnarray}\label{f-odd-even}
\lefteqn{\langle df_1\wedge \ldots \wedge df_{2n+1-2k}\wedge ds,\; \frac{\Lambda_0'\,^{n+1-k}}{(n+1-k)!} \rangle }\nonumber \\
& = & \langle df_1\wedge \ldots \wedge df_{2n+1-2k}\wedge ds,\; \frac{\Lambda_0^{n+1-k}}{(n+1-k)!} + \frac{\partial}{\partial s}\wedge E_0 \wedge \frac{\Lambda_0^{n-k}}{(n-k)!}\rangle \nonumber \\
& = & \langle df_1\wedge \ldots \wedge df_{2n+1-2k}\wedge ds,\; \frac{\partial}{\partial s}\wedge E_0 \wedge \frac{\Lambda_0^{n-k}}{(n-k)!}\rangle = - f \neq 0
\end{eqnarray}
on the open and dense subset $\mathcal{U}'= \mathcal{U} \times \R$ of $M'$. Furthermore, we view any bivector field $\Lambda$ on $(M,\vartheta_0,\Theta_0)$ having as  Casimirs the given functions as a bivector field on $(M',\omega_0')$ having $f_1,\ldots,f_{2n+1-2k}$ and $f_{2n+2-2k}(x,s)=s$ as Casimirs. Let $D'^\circ$ be the annihilator of the distribution $D'=\langle X_{f_1}',\ldots,X_{f_{2n+2-2k}}' \rangle$ on $M'$ generated by the hamiltonian vector fields $X_{f_i}'= \Lambda_0'^{\#}(df_i) = \Lambda_0^\#(df_i)-\langle df_i,E_0\rangle \displaystyle{\frac{\partial}{\partial s}}$, $i = 1,\ldots,2n+1-2k$, and $X'_{f_{2n+2-2k}} = \Lambda_0'^{\#}(ds)=E_0$ of $f_1,\ldots,f_{2n+1-2k}$ and $f_{2n+2-2k}(x,s) = s$ with respect to $\Lambda_0'$. Then, from Proposition \ref{damianou:prop-Lambda-sigma} we get that there exists an unique $2$-form $\sigma'$ on $M'$, which is a section of $\bigwedge^2 D'^\circ$ of maximal rank $2k$ on $\mathcal{U}'=\mathcal{U}\times \R$, such that $\Lambda = \Lambda_0'^{\#}(\sigma')$. Moreover, since  $\Lambda$ is independent of $s$ and without  a term of type $X\wedge \displaystyle{\frac{\partial}{\partial s}}$, $\sigma'$ must be of type
\begin{equation}\label{sigma'}
\sigma' = \sigma + \tau \wedge ds,
\end{equation}
where $\sigma$ and $\tau$ are, respectively, a $2$-form and a $1$-form on $M$ having the following additional properties:
\begin{enumerate}
\item [i)]
$\sigma$ is a section $\bigwedge^2 \langle E_0\rangle^\circ$, i.e., $\sigma$ is a semi-basic $2$-form on $M$ with respect to $(\Lambda_0,E_0)$;
\item[ii)]
$\tau$ is a section of $D^\circ = \langle X_{f_1},\ldots,X_{f_{2n+1-2k}},E_0\rangle^\circ$, where $X_{f_i} = \Lambda_0^\#(df_i)$, i.e., $\tau$ is a semi-basic $1$-form on $(M,\Lambda_0,E_0)$ which is also semi-basic with respect to $X_{f_1},\ldots, X_{f_{2n+1-2k}}$;
\item[iii)]
for any $f_i$, $i=1,\ldots,2n+1-2k$, $\sigma(X_{f_i},\cdot) + \langle df_i,E_0\rangle \tau =0$.
\end{enumerate}
Consequently, $\Lambda$ is written, in an unique way, as $\Lambda = \Lambda_0^\#(\sigma) + \Lambda_0^\#(\tau)\wedge E_0$.

\vspace{1mm}

Summarizing, we may formulate the next Proposition.
\begin{proposition}
Under the above notations and assumptions, a bivector field $\Lambda$ on $(M,\vartheta_0,\Theta_0)$, of rank at most $2k$, has as unique Casimirs the functions $f_1,\ldots, f_{2n+1-2k}$ if and only if its corresponding pair of forms $(\sigma,\tau)$ has the properties (i)-(iii) and $(rank\,\sigma,\, rank\,\tau)=(2k,0)$ or $(2k,1)$ or $(2k-2,1)$ on $\mathcal{U}$.
\end{proposition}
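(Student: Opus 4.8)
The plan is to reduce everything to the even-dimensional case already treated in subsection \ref{section-even}, exactly along the lines sketched in the paragraph preceding the statement. Recall that a bivector field $\Lambda$ on $M$ with Casimirs $f_1,\ldots,f_{2n+1-2k}$ corresponds bijectively to a bivector field $\Lambda$ on $M'=M\times\R$ (we use the same letter, viewing $\Lambda$ as $s$-independent and with no $\partial/\partial s$ component) whose Casimirs are $f_1,\ldots,f_{2n+1-2k}$ together with $f_{2n+2-2k}(x,s)=s$. On $M'$ we have the nondegenerate almost Poisson tensor $\Lambda_0'=\Lambda_0+\frac{\partial}{\partial s}\wedge E_0$, and by \eqref{f-odd-even} the function $f=\langle df_1\wedge\ldots\wedge df_{2n+1-2k}\wedge ds,\;\frac{\Lambda_0'^{\,n+1-k}}{(n+1-k)!}\rangle$ (up to sign) is nonzero on $\mathcal{U}'=\mathcal{U}\times\R$. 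So the hypotheses of Proposition \ref{damianou:prop-Lambda-sigma} (applied to $M'$, with $n$ replaced by $n+1$ and the $2n-2k$ functions replaced by the $2n+2-2k$ functions $f_1,\ldots,f_{2n+1-2k},s$) are met.

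First I would invoke Proposition \ref{damianou:prop-Lambda-sigma} on $M'$: $\Lambda$, viewed on $(M',\omega_0')$ and of rank at most $2k$, admits $f_1,\ldots,f_{2n+1-2k},s$ as its unique Casimirs if and only if the corresponding $2$-form $\sigma'=\Lambda_0'^{\#-1}(\Lambda)$ is a section of $\bigwedge^2 D'^\circ$ of maximal rank on $\mathcal{U}'$. This is the ``only if and if'' content of the statement, once we translate $\sigma'$ back to a pair of forms on $M$. Next I would carry out that translation. Because $\Lambda$ is $s$-independent and has no $X\wedge\frac{\partial}{\partial s}$ term, and because $\Lambda_0'^\#$ sends $ds\mapsto E_0$ and, for semi-basic $\zeta$, $\zeta\mapsto\Lambda_0^\#(\zeta)$ (with the extra term $-\langle df_i,E_0\rangle\partial/\partial s$ only for the non-semi-basic $df_i$), the preimage $\sigma'$ must decompose as $\sigma'=\sigma+\tau\wedge ds$ with $\sigma\in\Omega^2(M)$ and $\tau\in\Omega^1(M)$; plugging $\Lambda=\Lambda_0^\#(\sigma)+\Lambda_0^\#(\tau)\wedge E_0$ into $\Lambda_0'^\#(\sigma')$ and matching components gives this decomposition and pins down $\sigma,\tau$ uniquely. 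I would then unwind the three membership conditions. The requirement $\sigma'\in\bigwedge^2 D'^\circ$ with $D'=\langle X_{f_1}',\ldots,X_{f_{2n+1-2k}}',E_0\rangle$ and $X_{f_i}'=\Lambda_0^\#(df_i)-\langle df_i,E_0\rangle\frac{\partial}{\partial s}$, $X'_{2n+2-2k}=E_0$: contracting $\sigma'=\sigma+\tau\wedge ds$ with $E_0$ forces $i_{E_0}\sigma=0$ and $i_{E_0}\tau=0$, i.e. $\sigma$ semi-basic and $\tau$ semi-basic (property (i) and the $E_0$-part of (ii)); contracting with $X_{f_i}'$ gives $\sigma(X_{f_i},\cdot)+\langle df_i,E_0\rangle\tau=0$ on $M$ together with $i_{X_{f_i}}\tau=0$, which is exactly (iii) plus the remaining part of (ii). Conversely, (i)--(iii) imply $\sigma'\in\bigwedge^2 D'^\circ$.

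Finally I would translate the rank condition. The rank of $\sigma'=\sigma+\tau\wedge ds$ as a $2$-form on the $(2n+2)$-dimensional $M'$, restricted to $\mathcal{U}'$, is required by Proposition \ref{damianou:prop-Lambda-sigma} to be maximal, i.e. equal to $2k$ (the codimension of $D'$). Now $\sigma'$ is forced by (iii) to be built from a semi-basic $2$-form $\sigma$ on $M$ and a semi-basic $1$-form $\tau$ on $M$ with $\tau$ also annihilating the $X_{f_i}$; a short linear-algebra argument at a point shows that the rank of $\sigma+\tau\wedge ds$ is $\operatorname{rank}\sigma$ if $\tau=0$ and $\operatorname{rank}\sigma$ or $\operatorname{rank}\sigma+2$ according as $\tau$ lies in the image of $\sigma^\flat$ or not (adding a wedge with an ``extra'' covector $ds$ independent of everything in $\sigma$). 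Imposing $\operatorname{rank}\sigma'=2k$ then leaves precisely the three possibilities $(\operatorname{rank}\sigma,\operatorname{rank}\tau)=(2k,0)$, $(2k,1)$, or $(2k-2,1)$, which is the assertion. Combining the three translations with Proposition \ref{damianou:prop-Lambda-sigma} finishes the proof. The main obstacle I anticipate is the rank bookkeeping in this last paragraph: one must be careful that $\tau$, being semi-basic, interacts with $ds$ as a genuinely new direction, and that the case $\operatorname{rank}\tau=1$ with $\tau$ in the radical of $\sigma$ is exactly what produces the $(2k,1)$ branch while $\tau$ outside the radical — which would push the rank to $2k+2$ — is excluded; verifying that no fourth case slips through requires the pointwise normal-form analysis of the pair $(\sigma,\tau)$ subject to (iii).
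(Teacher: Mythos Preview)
Your proposal is correct and follows exactly the paper's approach: the paper does not give a formal proof after the proposition but treats the preceding discussion (reduction to $M'=M\times\R$, application of Proposition~\ref{damianou:prop-Lambda-sigma}, decomposition $\sigma'=\sigma+\tau\wedge ds$, and derivation of (i)--(iii)) as the argument, closing with ``Summarizing, we may formulate the next Proposition.'' Your write-up supplies more detail than the paper on the rank bookkeeping for $\sigma'=\sigma+\tau\wedge ds$; one small terminological slip to fix is that in your final paragraph ``$\tau$ in the radical of $\sigma$'' should read ``$\tau$ in the image of $\sigma^\flat$'' (equivalently, $\tau$ annihilating $\ker\sigma$), and you may note that condition (iii) together with the nonvanishing of $f$ already forces $\tau\in\mathrm{Im}\,\sigma^\flat$ whenever $\tau\neq 0$, which is what rules out the would-be $(2k,1)$-with-rank-jump case.
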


On the other hand, it follows from Theorem \ref{damianou:THEOREM} that the bracket $\{\cdot,\cdot\}$ of $\Lambda$ on $\C$ is calculated, for any $h_1,h_2\in \C$, viewed as elements of $C^\infty(M')$, by the formula
\begin{equation*}
\{h_1,h_2\}\Omega'\stackrel{(\ref{f-odd-even})}{=} \frac{1}{f}dh_1\wedge dh_2\wedge (\sigma'+ \frac{g'}{k-1}\omega_0')\wedge \frac{\omega_0'\,^{k-2}}{(k-2)!}\wedge df_1\wedge \ldots \wedge df_{2n+1-2k}\wedge ds,
\end{equation*}
where $\Omega'=\displaystyle{\frac{\omega_0'\,^{n+1}}{(n+1)!}}$ and $g'=i_{\Lambda_0'}\sigma'$. But, $\Omega'\stackrel{(\ref{volume'})}{=}-\Omega\wedge ds$, $\Omega = \vartheta_0\wedge \displaystyle{\frac{\Theta_0^n}{n!}}$ being a volume form on $M$, and $g'=i_{\Lambda_0'}\sigma' = i_{\Lambda_0 + \partial/\partial s\wedge E_0}(\sigma + \tau \wedge ds) = i_{\Lambda_0}\sigma = g$. Thus, taking into account (\ref{volume'}) and (\ref{sigma'}), we have
\begin{equation*}
\{h_1,h_2\}\Omega\wedge ds = - \frac{1}{f}dh_1\wedge dh_2\wedge (\sigma+ \frac{g}{k-1}\Theta_0)\wedge \frac{\Theta_0^{k-2}}{(k-2)!}\wedge df_1\wedge \ldots \wedge df_{2n+1-2k}\wedge ds
\end{equation*}
which is equivalent to
\begin{equation*}
\{h_1,h_2\}\Omega = - \frac{1}{f}dh_1\wedge dh_2\wedge (\sigma+ \frac{g}{k-1}\Theta_0)\wedge \frac{\Theta_0^{k-2}}{(k-2)!}\wedge df_1\wedge \ldots \wedge df_{2n+1-2k}.
\end{equation*}

However, according to Proposition \ref{damianou:theorem-cond-delta-sigma}, $\{\cdot,\cdot\}$ is a Poisson bracket on $\C \subset C^\infty(M')$ if and only if
\begin{equation}\label{cond-sigma'}
2\sigma'\wedge \delta'(\sigma')=\delta'(\sigma'\wedge \sigma'),
\end{equation}
where $\delta'= \ast\,'d\,\ast'$ is the codifferential on $\Omega(M')$ of $(M',\omega_0')$ defined by the isomorphism $\ast': \Omega^p(M') \to \Omega^{2n+2-p}(M')$ of (\ref{def-ast}). We want to translate (\ref{cond-sigma'}) to a condition on $(\sigma,\tau)$. Let $\Omega_{sb}^p(M)$ be the space of semi-basic $p$-forms on $(M,\Lambda_0,E_0)$, $\ast$ the isomorphism between $\Omega_{sb}^p(M)$ and $\Omega_{sb}^{2n-p}(M)$ given, for any $\varphi \in \Omega_{sb}^p(M)$, by
\begin{equation*}
\ast \, \varphi = (-1)^{(p-1)p/2}i_{\Lambda_0^\#(\varphi)}\frac{\Theta_0^n}{n!},
\end{equation*}
$d_{sp} : \Omega_{sb}^p(M) \to \Omega_{sb}^{p+1}(M)$ the operator which corresponds to each semi-basic form $\varphi$ the semi-basic part of its differential $d\varphi$, and $\delta = \ast \,d_{sb} \,\ast$ the associated ``codifferential'' operator on $\Omega_{sb}(M)=\oplus_{p\in \Z}\Omega_{sb}^p(M)$. By a straightforward, but long, computation, we show that (\ref{cond-sigma'}) is equivalent to the system
\begin{equation}\label{cond-sigma-tau}
\left\{
\begin{array}{l}
2\sigma\wedge \delta(\sigma)=\delta(\sigma\wedge \sigma)\\
\\
\delta(\sigma\wedge \tau) + \delta(\sigma)\wedge\tau - \sigma\wedge \delta(\tau) = (i_{\Lambda_0^\#(d\vartheta_0)}\sigma)\sigma -\frac{1}{2}i_{\Lambda_0^\#(d\vartheta_0)}(\sigma \wedge \sigma).
\end{array}
\right.
\end{equation}
Hence, we deduce:
\begin{proposition}\label{prop-sigma-odd}
Under the above assumptions and notations, $\Lambda = \Lambda_0^\#(\sigma) + \Lambda_0^\#(\tau)\wedge E_0$ defines a Poisson structure on $(M, \vartheta_0,\Theta_0)$ if and only if $(\sigma,\tau)$ satisfies (\ref{cond-sigma-tau}).
\end{proposition}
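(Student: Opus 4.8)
The plan is to reduce the odd-dimensional statement to the even-dimensional Proposition \ref{damianou:theorem-cond-delta-sigma} applied on the product manifold $M' = M \times \R$, exactly as the discussion preceding the Proposition sets up. We already know that $\Lambda = \Lambda_0^\#(\sigma) + \Lambda_0^\#(\tau)\wedge E_0$ on $M$ corresponds to $\Lambda = \Lambda_0'^{\#}(\sigma')$ on $M'$ with $\sigma' = \sigma + \tau\wedge ds$, and $\omega_0' = \Theta_0 + ds\wedge\vartheta_0$ is a nondegenerate (almost symplectic) $2$-form on $M'$. By Proposition \ref{damianou:theorem-cond-delta-sigma}, $\Lambda$ is a Poisson tensor on $M'$ — hence on $M$ — if and only if $2\sigma'\wedge\delta'(\sigma') = \delta'(\sigma'\wedge\sigma')$, which is precisely condition (\ref{cond-sigma'}). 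So the entire content of Proposition \ref{prop-sigma-odd} is the assertion that (\ref{cond-sigma'}) on $M'$ unpacks into the pair of conditions (\ref{cond-sigma-tau}) on $M$.

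The core computation is therefore to understand how the codifferential $\delta' = \ast'\,d\,\ast'$ on $M'$ acts on forms of the special type $\sigma' = \sigma + \tau\wedge ds$ with $\sigma$ semi-basic $2$-form and $\tau$ semi-basic $1$-form on $M$ (in particular both pulled back from $M$, independent of $s$). First I would establish a ``Künneth-type'' formula relating $\ast'$ on $M'$ to the intrinsic $\ast$ on semi-basic forms of $M$ defined via $\Theta_0$: using $\Lambda_0' = \Lambda_0 + \frac{\partial}{\partial s}\wedge E_0$, the volume $\Omega' = -\Omega\wedge ds = -\vartheta_0\wedge\frac{\Theta_0^n}{n!}\wedge ds$, and the definition (\ref{def-ast}), one computes $\ast'$ on each homogeneous piece. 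Concretely, for a semi-basic $p$-form $\varphi$ on $M$ one gets $\ast'\varphi$ and $\ast'(\varphi\wedge ds)$ each expressed in terms of $\ast\varphi$, $\vartheta_0$, and $ds$; I expect formulas roughly of the shape $\ast'(\varphi) = \pm(\ast\varphi)\wedge\vartheta_0\wedge ds + (\text{lower})$ and $\ast'(\varphi\wedge ds) = \pm(\ast\varphi)\wedge\vartheta_0 + (\text{lower})$, where the ``lower'' terms involve $i_{E_0}$ or the $\vartheta_0$-component and vanish on genuinely semi-basic input. Next I would apply $d$ on $M'$: since $\sigma,\tau$ are $s$-independent, $d$ acts as $d$ on $M$, but after hitting the $\ast'$-image (which now contains the factor $\vartheta_0$) one picks up a $d\vartheta_0$ term — this is exactly where the $\Lambda_0^\#(d\vartheta_0)$ contractions in the second line of (\ref{cond-sigma-tau}) come from. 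One must also carefully split $d$ on $M$ into its semi-basic part $d_{sb}$ and the complementary part ($\vartheta_0\wedge i_{E_0}d$-type terms), since $\delta = \ast\,d_{sb}\,\ast$ only sees $d_{sb}$.

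Having the explicit form of $\delta'(\sigma')$ and $\delta'(\sigma'\wedge\sigma')$ in terms of $\delta(\sigma)$, $\delta(\tau)$, the products $\sigma\wedge\sigma$, $\sigma\wedge\tau$, and the correction terms $i_{\Lambda_0^\#(d\vartheta_0)}\sigma$ and $i_{\Lambda_0^\#(d\vartheta_0)}(\sigma\wedge\sigma)$, I would then expand $2\sigma'\wedge\delta'(\sigma') - \delta'(\sigma'\wedge\sigma')$ and collect terms by their $ds$-degree: the ``$ds$-free'' part of the resulting identity gives the first equation of (\ref{cond-sigma-tau}) — which is just the even-dimensional Poisson condition for the semi-basic $2$-form $\sigma$ with respect to $\delta$ — and the ``coefficient of $ds$'' gives the second, inhomogeneous equation. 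The whole thing is bilinear/quadratic in $(\sigma,\tau)$, so matching coefficients is mechanical once the $\ast'$-dictionary and the $d$–$d_{sb}$ split are in hand.

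The main obstacle is precisely the bookkeeping in that last step: keeping the signs straight through the graded commutators, the $(-1)^{(p-1)p/2}$ factors in (\ref{def-ast}), the decomposition of $d$ on $M$ into $d_{sb}$ plus a $\vartheta_0$-part, and isolating where $d\vartheta_0$ enters (it enters through $d(\vartheta_0\wedge(\cdots))$ after applying $\ast'$). This is the ``straightforward but long computation'' the paper alludes to. A secondary subtlety is verifying that $\sigma'\wedge\sigma' = \sigma\wedge\sigma + 2\sigma\wedge\tau\wedge ds$ (the $\tau\wedge ds\wedge\tau\wedge ds$ term vanishes) and that $i_{\Lambda_0'}\sigma' = i_{\Lambda_0}\sigma = g$, as already noted in the text, so that no stray $\tau$-dependence appears in $g'$; these are quick but essential consistency checks that the stated system is complete. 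Once the $ds$-graded pieces are matched, the equivalence of (\ref{cond-sigma'}) with (\ref{cond-sigma-tau}), and hence the Proposition, follows immediately from Proposition \ref{damianou:theorem-cond-delta-sigma}.
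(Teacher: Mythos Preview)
Your proposal is correct and follows exactly the paper's approach: the paper itself reduces to Proposition \ref{damianou:theorem-cond-delta-sigma} on $M'$ to obtain (\ref{cond-sigma'}), and then simply states that ``by a straightforward, but long, computation'' (\ref{cond-sigma'}) is equivalent to the system (\ref{cond-sigma-tau}), deducing the Proposition. Your outline of that computation --- expressing $\ast'$ in terms of $\ast$, splitting $d$ into $d_{sb}$ plus a $\vartheta_0$-part so that $d\vartheta_0$ generates the inhomogeneous terms, and separating by $ds$-degree --- is in fact more detailed than what the paper provides.
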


Concluding, we can announce the following theorem.
\begin{theorem}\label{THEOREM-ODD}
Let $f_1,\ldots,f_{2n+1-2k}$ be smooth functions on a $(2n+1)$-dimensional smooth manifold $M$ which are functionally independent almost everywhere, $(\vartheta_0,\Theta_0)$ an almost cosymplectic structure on $M$ such that (\ref{f-odd}) holds on an open and dense subset $\mathcal{U}$ of $M$, $\Omega = \vartheta_0\wedge\displaystyle{\frac{\Theta_0^n}{n!}}$ the corresponding volume form on $M$, and $(\sigma,\tau)$ an element of $\Omega^2_{sb}(M)\times \Omega^1_{sb}(M)$, with $(rank\,\sigma,\, rank\,\tau)=(2k,0)$ or $(2k,1)$ or $(2k-2,1)$ on $\mathcal{U}$, that has the properties (ii)-(iii) and satisfies (\ref{cond-sigma-tau}). Then, the bracket $\{\cdot,\cdot\}$ on $\C$ given, for any $h_1,h_2 \in \C$, by
\begin{equation}\label{br-odd}
\{h_1,h_2\}\Omega = - \frac{1}{f}dh_1\wedge dh_2\wedge (\sigma+ \frac{g}{k-1}\Theta_0)\wedge \frac{\Theta_0^{k-2}}{(k-2)!}\wedge df_1\wedge \ldots \wedge df_{2n+1-2k},
\end{equation}
where $f$ is that of (\ref{f-odd}) and $g = i_{\Lambda_0}\sigma$, defines a Poisson structure $\Lambda$ on $M$, $\Lambda = \Lambda_0^\#(\sigma) + \Lambda_0^\#(\tau)\wedge E_0$, with symplectic leaves of dimension at most $2k$ for which $f_1,\ldots,f_{2n+1-2k}$ are Casimirs. The converse is also true.
\end{theorem}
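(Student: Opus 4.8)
The strategy is to reduce everything to the even-dimensional Theorem \ref{damianou:THEOREM} applied on the product manifold $M' = M \times \R$, exactly as set up in the discussion preceding the statement. First I would note that the hypotheses on $(\sigma,\tau)$ are precisely what is needed to build the $2$-form $\sigma' = \sigma + \tau \wedge ds$ on $M'$: property (i) (semi-basicity of $\sigma$), property (ii) (the annihilator condition on $\tau$), and property (iii) are equivalent, via the decomposition $T^\ast M' $ associated to $\Lambda_0' = \Lambda_0 + \frac{\partial}{\partial s}\wedge E_0$, to the statement that $\sigma'$ is a section of $\bigwedge^2 D'^\circ$, where $D' = \langle X'_{f_1},\ldots,X'_{f_{2n+1-2k}}, E_0\rangle$; and the three possible rank pairs for $(\sigma,\tau)$ correspond exactly to $\sigma'$ having maximal rank $2k$ on $\mathcal{U}' = \mathcal{U}\times \R$. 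Next I would invoke (\ref{f-odd-even}) to see that the nondegeneracy condition (\ref{damianou:f}) for Theorem \ref{damianou:THEOREM} holds on $M'$ with the functions $f_1,\ldots,f_{2n+1-2k}, s$ and with constant $-f \neq 0$. By Proposition \ref{prop-sigma-odd}, the system (\ref{cond-sigma-tau}) is equivalent to (\ref{cond-sigma'}), i.e. to $\sigma'$ satisfying (\ref{damianou:cond-delta-sigma}) on $M'$. Hence all hypotheses of Theorem \ref{damianou:THEOREM} are met for $(M', \omega_0', \sigma')$.

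Applying Theorem \ref{damianou:THEOREM} then yields a Poisson tensor $\Lambda' = \Lambda_0'^\#(\sigma')$ on $M'$, of rank at most $2k$, with $f_1,\ldots,f_{2n+1-2k}, s$ as Casimirs, whose bracket is given by (\ref{damianou:bracket-Lambda-Omega}) with $\omega_0', \Omega', g' = i_{\Lambda_0'}\sigma'$ in place of $\omega_0, \Omega, g$. I would then unwind this back to $M$: expand $\Lambda_0'^\#(\sigma') = \Lambda_0^\#(\sigma) + \Lambda_0^\#(\tau)\wedge E_0$ (this is the computation already recorded in the text), observe that $g' = i_{\Lambda_0'}\sigma' = i_{\Lambda_0}\sigma = g$ since the $\partial/\partial s \wedge E_0$ part contracts only against the $\tau\wedge ds$ part and produces a scalar multiple of $\langle ds, ds\rangle = 0$-type terms that vanish, and use $\Omega' = -\Omega\wedge ds$ together with the expansions (\ref{volume'}) of the powers $\omega_0'^{\,m}/m!$. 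Substituting these into the $M'$-bracket formula, every term containing $ds$ in the wrong slot drops out, and the surviving identity is
\begin{equation*}
\{h_1,h_2\}\Omega\wedge ds = -\frac{1}{f}dh_1\wedge dh_2\wedge\Big(\sigma + \frac{g}{k-1}\Theta_0\Big)\wedge\frac{\Theta_0^{k-2}}{(k-2)!}\wedge df_1\wedge\ldots\wedge df_{2n+1-2k}\wedge ds,
\end{equation*}
which, since wedging with $ds$ is injective on forms pulled back from $M$, is equivalent to (\ref{br-odd}). Since $\Lambda'$ restricts to a tensor $\Lambda$ on $M$ that is $s$-independent and has no $\partial/\partial s$ component, $\Lambda$ is a genuine bivector field on $M$; it is Poisson because $\Lambda'$ is, and its symplectic leaves, being the intersections of those of $\Lambda'$ with the slices $M\times\{s\}$, have dimension at most $2k$ with $f_1,\ldots,f_{2n+1-2k}$ as Casimirs.

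For the converse, given a Poisson tensor $\Lambda$ on $M$ of rank $2k$ on an open dense set, I would lift it to the Poisson tensor $\Lambda' = \Lambda + \frac{\partial}{\partial s}\wedge E_0 \cdot 0$... more precisely view $\Lambda$ itself as a Poisson tensor on $M'$ (independent of $s$), which has $s$ among its Casimirs; apply the converse half of Theorem \ref{damianou:THEOREM} on $M'$ to produce functions $f_1,\ldots,f_{2n+1-2k}$ (together with $s$) and a section $\sigma'$ of $\bigwedge^2 D'^\circ$ of maximal rank satisfying (\ref{damianou:cond-delta-sigma}); then decompose $\sigma' = \sigma + \tau\wedge ds$ and read off properties (i)--(iii) and system (\ref{cond-sigma-tau}) from Proposition \ref{prop-sigma-odd}, and the representation (\ref{br-odd}) from the one on $M'$ by the same wedge-with-$ds$ argument run backwards. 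The main obstacle, and the one step that genuinely requires the long computation alluded to in the text, is Proposition \ref{prop-sigma-odd}: verifying that condition (\ref{damianou:cond-delta-sigma}) for $\sigma'$ on $M'$ decomposes into the pair (\ref{cond-sigma-tau}) on $M$ is delicate because the codifferential $\delta' = \ast' d\,\ast'$ on $M'$ does not simply decompose along the $M$-factor: the failure of $\vartheta_0$ to be closed enters through $d\vartheta_0$, which is exactly why the inhomogeneous terms $(i_{\Lambda_0^\#(d\vartheta_0)}\sigma)\sigma - \tfrac12 i_{\Lambda_0^\#(d\vartheta_0)}(\sigma\wedge\sigma)$ appear in the second equation of (\ref{cond-sigma-tau}). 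Once that identification is in hand, the rest of the argument is the bookkeeping of wedging, contracting, and restricting described above.
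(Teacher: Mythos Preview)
Your proposal is correct and follows essentially the same approach as the paper: the entire argument for Theorem~\ref{THEOREM-ODD} is the reduction to Theorem~\ref{damianou:THEOREM} on $M' = M\times\R$ via $\sigma' = \sigma + \tau\wedge ds$, together with the computations $\Omega' = -\Omega\wedge ds$, $g' = g$, and the identification of (\ref{cond-sigma'}) with (\ref{cond-sigma-tau}) from Proposition~\ref{prop-sigma-odd}, all of which you have identified accurately. The paper does not give a separate proof block for the theorem; the derivation is exactly the sequence of steps in the discussion preceding it, and your outline reproduces that sequence (including the acknowledgement that the only genuinely laborious step is the decomposition of $\delta'$ underlying Proposition~\ref{prop-sigma-odd}).
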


\begin{remark}
{\rm We remark that, in both cases (of even dimension $m=2n$ and of odd dimension $m=2n+1$), when $k=1$, the brackets (\ref{damianou:bracket-Lambda-Omega}) and (\ref{br-odd}) are reduced to a bracket of type (\ref{damianou:equation2}). Precisely,
\begin{equation*}
\{h_1,h_2\}\Omega = -\frac{g}{f}dh_1\wedge dh_2\wedge df_1\wedge \ldots \wedge df_{m-2}.
\end{equation*}}
\end{remark}

\section{Some Examples}
\subsection{Dirac Brackets}
Let $(M,\omega_0)$ be a symplectic manifold of dimension $2n$, $\Lambda_0$ its associated Poisson structure, and $f_1,\ldots, f_{2n-2k}$ smooth functions on $M$ whose the differentials are linearly independent at each point in the submanifold $M_0$ of $M$ defined by the equations $f_1(x) =0, \,\ldots, \, f_{2n-2k}(x) =0$. We assume that the matrix $\big(\{f_i,f_j\}_{_0} \big)$ is invertible on an open neighborhood $\mathcal{W}$ of $M_0$ in $M$ and we denote by $c_{ij}$ the coefficients of its inverse matrix which are smooth functions on $\mathcal{W}$ such that $\sum_{j=1}^{2n-2k}\{f_i,f_j\}_{_0}c_{jk} = \delta_{ik}$. We consider on $\mathcal{W}$ the $2$-form
\begin{equation}\label{damianou:sigma-dirac}
\sigma = \omega_0 + \sum_{i<j}c_{ij}df_i\wedge df_j.
\end{equation}
We will prove that it is a section of $\bigwedge^2 D^\circ$ of maximal rank on $\mathcal{W}$ which verifies (\ref{damianou:cond-delta-sigma}). As in subsection \ref{section-even}, $D$ denotes the subbundle of $TM$ generated by the hamiltonian vector fields $X_{f_i}$ of $f_i$, $i=1,\ldots,2n-2k$, with respect to $\Lambda_0$ and $D^\circ$ its annihilator. For any $X_{f_l}$, $l=1,\ldots,2n-2k$, we have
\begin{eqnarray*}
\sigma (X_{f_l},\cdot)& = & \omega_0(X_{f_l},\cdot) + \sum_{i<j}c_{ij}\langle df_i, X_{f_l}\rangle df_j - \sum_{i<j}c_{ij}\langle df_j, X_{f_l}\rangle df_i \\
& = & - df_l + \sum_{i<j}c_{ij}\{f_l,f_i\}_{_0}df_j   - \sum_{i<j}c_{ij}\{f_l,f_j\}_{_0}df_i \\
& = & -df_l + \sum_j\delta_{lj}df_j \,=\, -df_l + df_l\, =\, 0,
\end{eqnarray*}
which means that $\sigma$ is a section of $\bigwedge^2 D^\circ \to \mathcal{W}$. The assumption that $\big(\{f_i,f_j\}_{_0}\big)$ is invertible ensures  that $D$ is a symplectic subbundle of $T_{\mathcal{W}}M$. So, for any $x \in \mathcal{W}$, $T^\ast_x M = D_x^\circ \oplus \langle df_1,\ldots,df_{2n-2k} \rangle_x$ and $\bigwedge^2 T_x^\ast M = \bigwedge^2 D_x^\circ + \bigwedge^2 \langle df_1,\ldots,df_{2n-2k}\rangle_x + D_x^\circ \wedge \langle df_1,\ldots,df_{2n-2k}\rangle_x$. But, $\omega_0$ is a nondegenerate section of $\bigwedge^2 T^\ast M$ and the part $\sum_{i<j}c_{ij}df_i\wedge df_j$ of $\sigma$ is a smooth section of $\bigwedge^2 \langle df_1,\ldots,df_{2n-2k}\rangle$ of maximal rank on $\mathcal{W}$, because $\det(c_{ij})\neq 0$ on $\mathcal{W}$. Thus, $\sigma$ is of maximal rank on $\mathcal{W}$. Also, we have
\begin{equation*}
g = i_{\Lambda_0}\sigma = - \langle \omega_0 + \sum_{i<j}c_{ij}df_i\wedge df_j, \Lambda_0\rangle = - n - \sum_{i<j}c_{ij} \{f_i,f_j\}_{_0} = -n + (n-k) = -k,
\end{equation*}
and
\begin{eqnarray}\label{damianou:dirac-sigma}
\ast \, \sigma & \stackrel{(\ref{damianou:ast-Phi-sigma})(\ref{damianou:expression-Phi})}{=} & - \frac{1}{f}(\sigma + \frac{g}{k-1}\omega_0)\wedge \frac{\omega_0^{k-2}}{(k-2)!}\wedge df_1\wedge\ldots \wedge df_{2n-2k} \nonumber \\
& = & - \frac{1}{f} (\omega_0 + \sum_{i<j}c_{ij}df_i\wedge df_j - \frac{k}{k-1}\omega_0) \wedge \frac{\omega_0^{k-2}}{(k-2)!}\wedge df_1\wedge\ldots \wedge df_{2n-2k} \nonumber \\
& = & \frac{1}{f} \frac{\omega_0^{k-1}}{(k-1)!}\wedge df_1 \wedge \ldots \wedge df_{2n-2k}.
\end{eqnarray}
Consequently,
\begin{equation*}
\delta \sigma  =  (\ast \,d \,\ast)\sigma  \stackrel{(\ref{damianou:dirac-sigma})}{=} \ast (- \frac{df}{f}\wedge (\ast \,\sigma)) \stackrel{(\ref{damianou:property-ii})}{=} - \frac{1}{f} i_{X_f} \sigma
\end{equation*}
and
\begin{equation}\label{damianou:1}
\quad 2\sigma \wedge \delta(\sigma) = -\frac{2}{f}\sigma \wedge (i_{X_f}\sigma) = - \frac{1}{f}i_{X_f}(\sigma \wedge \sigma).
\end{equation}
On the other hand,
\begin{eqnarray}\label{damianou:dirac-sigma-sigma}
\ast \, (\sigma \wedge \sigma) & \stackrel{(\ref{damianou:property-ii})}{=} & - i_{\Lambda_0^\#(\sigma)} (\ast \, \sigma) \stackrel{(\ref{damianou:dirac-sigma})}{=} - \frac{1}{f} \big(i_{\Lambda_0^\#(\sigma)}\frac{\omega_0^{k-1}}{(k-1)!} \big)\wedge df_1 \wedge \ldots \wedge df_{2n-2k} \nonumber \\
& \stackrel{(\ref{damianou:eq-Lambda-k})}{=} & \frac{1}{f}\,[\ast (\sigma \wedge \frac{\omega_0^{n-k+1}}{(n-k+1)!})]\wedge df_1 \wedge \ldots \wedge df_{2n-2k} \nonumber \\
&\stackrel{(\ref{damianou:ast-sigma-k-1})(\ref{damianou:sigma-dirac})}{=} & - \frac{1}{f}(\omega_0 + \sum_{i<j}c_{ij}df_i\wedge df_j - \frac{k}{k-2}\omega_0)\wedge \frac{\omega_0^{k-3}}{(k-3)!}\wedge df_1 \wedge \ldots \wedge df_{2n-2k} \nonumber \\
& = & \frac{2}{f}\wedge \frac{\omega_0^{k-3}}{(k-3)!}\wedge df_1 \wedge \ldots \wedge df_{2n-2k}
\end{eqnarray}
and
\begin{equation}\label{damianou:2}
\delta (\sigma \wedge \sigma) = \ast \,d \,\ast (\sigma \wedge \sigma) \stackrel{(\ref{damianou:dirac-sigma-sigma})}{=} \ast \, \big(-\frac{df}{f}\wedge \ast\,(\sigma \wedge \sigma) \big) \stackrel{(\ref{damianou:property-ii})}{=} - \frac{1}{f}i_{X_f}(\sigma \wedge \sigma).
\end{equation}
From (\ref{damianou:1}) and (\ref{damianou:2}) we conclude that $\sigma$ verifies (\ref{damianou:cond-delta-sigma}). Thus, according to Theorem \ref{damianou:THEOREM}, the bivector field
\begin{equation*}
\Lambda = \Lambda_0^\#(\sigma) = \Lambda_0 + \sum_{i<j}c_{ij}X_{f_i}\wedge X_{f_j}
\end{equation*}
defines a Poisson structure on $\mathcal{W}$ whose  corresponding bracket $\{\cdot,\cdot\}$ on $C^\infty (\mathcal{W}, \mathbb{R})$ is given, for any $h_1,h_2 \in C^\infty (\mathcal{W},\mathbb{R})$, by
\begin{equation}\label{damianou:br-dirac}
\{h_1,h_2\}\Omega = \frac{1}{f} dh_1 \wedge dh_2 \wedge \frac{\omega_0^{k-1}}{(k-1)!}\wedge df_1 \wedge \ldots \wedge df_{2n-2k}.
\end{equation}
In the above expression of $\Lambda$ we recognize the Poisson structure defined by Dirac \cite{damianou:dr} on an open neighborhood $\mathcal{W}$ of the constrained submanifold $M_0$ of $M$ and in (\ref{damianou:br-dirac}) the expression of the Dirac bracket given in \cite{damianou:Grab2}.

\subsection{Almost Poisson brackets for nonholonomic systems}
Let $Q$ be the configuration space of a Lagrangian system with Lagrangian function $L:TQ\to \R$, subjected to nonholonomic, homogeneous, constraints defined by a distribution $C \subset TQ$ on $Q$. In a local coordinate system $(q^1,\ldots,q^n,\dot{q}^1,\ldots,\dot{q}^n)$ of $TQ$, $C$ is described by the independent equations
\begin{equation}\label{nh-C}
\zeta_s^i(q)\dot{q}^s =0,\footnote{In this subsection, the Einstein convention of sum over repeated indices holds.} \quad \quad i = 1,\ldots,n-k,
\end{equation}
where $\zeta^i_s$, $s=1,\ldots,n$, are smooth functions on $Q$, and the equations of motion of the nonholonomic system are given by
\begin{equation}\label{nh-eqmotion}
\frac{d}{dt}(\frac{\partial L}{\partial \dot{q}^s}) - \frac{\partial L}{\partial q^s} = \lambda_i\zeta^i_s, \quad s= 1,\ldots,n,
\end{equation}
($\lambda_i$ being the Lagrangian multipliers) together with the constraint equations (\ref{nh-C}).

We now turn  to the Hamiltonian formulation of our system on the cotangent bundle $T^\ast Q$ of $Q$. We suppose that $T^\ast Q$ is equipped with the standard, nondegenerate, Poisson structure $\Lambda_0 = \frac{\partial}{\partial p_s}\wedge \frac{\partial}{\partial q^s}$ associated with the symplectic form $\omega_0=dp_s\wedge dq^s$. Let $\mathcal{L}:TQ \to T^\ast Q$, $(q^s,\dot{q}^s)\mapsto (q^s,p_s=\frac{\partial L}{\partial \dot{q}^s})$, be the Legendre transformation associated with $L$. Assuming that $L$ is regular, we have that $\mathcal{L}$ is a diffeomorphism which maps the equations of motion (\ref{nh-eqmotion}) to the system
\begin{eqnarray}\label{nh-hamilton}
\dot{q}^s & = & \frac{\partial H}{\partial p_s} \nonumber \\
\dot{p}_s & = & -\frac{\partial H}{\partial q^s} + \lambda_i\zeta^i_s,\quad \quad s=1,\ldots,n,
\end{eqnarray}
where $H : T^\ast Q \to \R$ is the Hamiltonian given by $H=(\dot{q}^s\frac{\partial L}{\partial \dot{q}^s} - L)\circ \mathcal{L}^{-1}$, and the constraint distribution $C$ to the constraint submanifold $\mathcal{M}$ of $T^\ast Q$, which is defined by the equations
\begin{equation*}
f^i(q,p) = \zeta^i_s(q)\frac{\partial H}{\partial p_s} =0, \quad \quad i=1,\ldots,n-k.
\end{equation*}
Also, the regularity assumption on $L$ implies that, at each point $(q,p)\in \mathcal{M}$, $T_{(q,p)}T^\ast Q$ splits into a direct sum of symplectic subspace and that the matrix $\mathcal{C} = \big(\mathcal{C}^{ij}\big)=\big(\Lambda_0(df^i,\mathbf{q}^\ast \zeta^j)\big)=\big(\zeta^i_s\displaystyle{\frac{\partial^2H}{\partial p_s \partial p_t}}\zeta^j_t\big)$, which is symmetric, is invertible on $\mathcal{M}$. Precisely,
\begin{equation*}
T_{(q,p)}T^\ast Q = T_{(q,p)}\mathcal{M}\oplus \mathcal{Z},
\end{equation*}
where $\mathcal{Z}\subset TT^\ast Q$ is the distribution on $T^\ast Q$ spanned by the vector fields
\begin{equation*}
Z^i = \zeta_s^i\frac{\partial}{\partial p_s} = \Lambda_0^\#(-\mathbf{q}^\ast \zeta^i),
\end{equation*}
where $\zeta^i = \zeta^i_s(q)dq^s$, $i=1,\ldots,n-k$, are the constraint $1$-forms on $Q$ and $\mathbf{q} : T^\ast Q \to Q$ is the canonical projection. Hence, in view of (\ref{nh-hamilton}), the Hamiltonian vector field $X_H = \Lambda_0^\#(dH)$ admits, along $\mathcal{M}$, the decomposition $X_H = X_{nh}-\lambda_iZ^i$. The part $X_{nh}$ is tangent to $\mathcal{M}$ and $\lambda_iZ^i$ lies on $\mathcal{Z}$, along $\mathcal{M}$. According to the results of \cite{c-mdl-d, mrl-nh, vm}, the dynamical equations of $X_{nh}$ on $\mathcal{M}$ are expressed in Hamiltonian form with respect to the restriction $\{\cdot,\cdot\}_{nh}^{\mathcal{M}}$ on $C^\infty(\mathcal{M})$ of the bracket $\{\cdot,\cdot\}_{nh}$ given, for any $H_1,H_2\in C^\infty(T^\ast Q)$, by
\begin{eqnarray}\label{nh-bracket}
\lefteqn{\{H_1,H_2\}_{nh} = \{H_1,H_2\}_{_0} +\mathcal{C}_{lm}\{f^l,H_1\}_{_0}\langle dH_2,Z^m\rangle} \nonumber \\
& & - \mathcal{C}_{lm}\{f^l,H_2\}_{_0}\langle dH_1,Z^m\rangle +  \mathcal{C}_{ij}\{f^j,f^l\}_{_0}\mathcal{C}_{lm}\langle dH_1,Z^i\rangle \langle dH_2,Z^m\rangle,
\end{eqnarray}
where $\{\cdot,\cdot\}_{_0}$ is the bracket of $\Lambda_0$ on $C^\infty(T^\ast Q)$ and $\big(\mathcal{C}_{ij}\big)$ is the inverse matrix of $\mathcal{C}$. In other words, for functions $h_1,h_2 \in C^\infty(\mathcal{M})$, the value of $\{h_1,h_2\}_{nh}^{\mathcal{M}}$ is equal to the value of $\{H_1,H_2\}_{nh}$ along $\mathcal{M}$, where $H_1$ and $H_2$ are, respectively, arbitrary smooth extensions of $h_1$ and $h_2$ on $T^\ast Q$. We will show that (\ref{nh-bracket}) holds, and so $\{\cdot,\cdot\}_{nh}^{\mathcal{M}}$, can be calculated by (\ref{br-almostPoisson}).

We remark that
\begin{equation*}
\Lambda_{nh} = \Lambda_0 + \mathcal{C}_{lm}X_{f^l}\wedge Z^m + \frac{1}{2}\mathcal{C}_{ij}\{f^j,f^l\}_{_0} \ \mathcal{C}_{lm}Z^i\wedge Z^m,
\end{equation*}
where $X_{f^l}=\Lambda_0^\#(df^l)$, is the bivector field on $T^\ast Q$ associated to (\ref{nh-bracket}) whose the kernel along $\mathcal{M}$ coincides with the space $\langle df^1,\ldots, df^{n-k}, \mathbf{q}^\ast \zeta^1,\ldots,\mathbf{q}^\ast \zeta^{n-k}\rangle \vert_{\mathcal{M}}$. In fact,
\begin{eqnarray*}
\Lambda_{nh}(df^s) & = & X_{f^s} + \mathcal{C}_{lm}\{f^l,f^s\}_{_0}Z^m - \mathcal{C}_{lm}\langle df^s,Z^m\rangle X_{f^l} \nonumber \\
& & + \, \frac{1}{2}\mathcal{C}_{ij}\{f^j,f^l\}_{_0}\mathcal{C}_{lm} \langle df^s, Z^i\rangle Z^m - \frac{1}{2}\mathcal{C}_{ij}\{f^j,f^l\}_{_0}\mathcal{C}_{lm}\langle df^s,Z^m\rangle Z^i \nonumber \\
& = & X_{f^s} + \mathcal{C}_{lm}\{f^l,f^s\}_{_0}Z^m - \mathcal{C}_{lm}\mathcal{C}^{sm} X_{f^l} \nonumber \\
& & +\, \frac{1}{2}\mathcal{C}_{ij}\{f^j,f^l\}_{_0}\mathcal{C}_{lm}\mathcal{C}^{si}Z^m - \frac{1}{2}\mathcal{C}_{ij}\{f^j,f^l\}_{_0}\mathcal{C}_{lm}\mathcal{C}^{sm}Z^i \nonumber \\
& = & X_{f^s} + \mathcal{C}_{lm}\{f^l,f^s\}_{_0}Z^m - X_{f^s} + \frac{1}{2}\{f^s,f^l\}_{_0}\mathcal{C}_{lm}Z^m - \frac{1}{2}\mathcal{C}_{ij}\{f^j,f^s\}_{_0}Z^i\\
& = & 0
\end{eqnarray*}
and
\begin{equation*}
\Lambda_{nh}(\mathbf{q}^\ast \zeta^s)  = \Lambda_0^\#(\mathbf{q}^\ast \zeta^s) +  \mathcal{C}_{lm}\langle \mathbf{q}^\ast \zeta^s, X_{f^l}\rangle Z^m = - Z^s + \mathcal{C}_{lm}\mathcal{C}^{ls}Z^m = - Z^s + Z^s = 0,
\end{equation*}
while $\mathrm{rank}\, \Lambda_{nh} = 2k$ everywhere on $\mathcal{M}$ \cite{vm}. On the other hand, $\Lambda_{nh}$ can be viewed as the image, via the isomorphism $\Lambda_0^\#$, of the $2$-form
\begin{equation*}
\sigma = \omega_0 - \mathcal{C}_{lm}df^l \wedge \mathbf{q}^\ast \zeta^m + \frac{1}{2}\mathcal{C}_{ij}\{f^j,f^l\}_{_0}\mathcal{C}_{lm}\mathbf{q}^\ast \zeta^i\wedge \mathbf{q}^\ast \zeta^m
\end{equation*}
on $T^\ast Q$ with $\mathrm{rank}\,\sigma = 2k$ on $\mathcal{M}$. Also,
\begin{equation*}
f= \langle df^1\wedge\ldots \wedge df^{n-k}\wedge \mathbf{q}^\ast \zeta^1 \wedge \ldots \wedge \mathbf{q}^\ast \zeta^{n-k},\; \frac{\Lambda_0^{n-k}}{(n-k)!} \rangle \neq 0
\end{equation*}
on $\mathcal{M}$, because $f^2 = \det J = \det \mathcal{C}^2 \neq 0$ on $\mathcal{M}$, where
\begin{equation*}
J= \left(\begin{array}{cc}
\{f^i,f^j\}_{_0} & \Lambda_0(df^i,\mathbf{q}^\ast \zeta^j) \\
\Lambda_0(\mathbf{q}^\ast \zeta^i, df^j) & \Lambda_0(\mathbf{q}^\ast \zeta^i,\mathbf{q}^\ast \zeta^j)
\end{array}
\right) =
\left( \begin{array}{cc}
\{f^i,f^j\}_{_0} & \mathcal{C} \\
-\mathcal{C} & 0
\end{array}
\right),
\end{equation*}
and
\begin{eqnarray*}
g & = & i_{\Lambda_0}\sigma = - \langle \omega_0 - \mathcal{C}_{lm}df^l \wedge \mathbf{q}^\ast \zeta^m + \frac{1}{2}\mathcal{C}_{ij}\{f^j,f^l\}_{_0}\mathcal{C}_{lm}\mathbf{q}^\ast \zeta^i\wedge \mathbf{q}^\ast \zeta^m, \, \Lambda_0 \rangle \\
& = &  - (n- \mathcal{C}_{lm}\mathcal{C}^{lm}) = - n + (n-k) = -k.
\end{eqnarray*}
Hence, we can apply (\ref{br-almostPoisson}) for the calculation of $\{\cdot,\cdot\}_{nh}$ on $C^\infty(T^\ast Q)$ and, by restriction, on $C^\infty(\mathcal{M})$. For any $H_1,H_2 \in C^\infty(T^\ast Q)$,
\begin{equation*}
\{H_1,H_2\}_{nh} \Omega = \frac{1}{f}dH_1 \wedge dH_2 \wedge \frac{\omega_0^{k-1}}{(k-1)!}\wedge df^1\wedge \ldots \wedge df^{n-k}\wedge \mathbf{q}^\ast \zeta^1 \wedge \ldots \wedge \mathbf{q}^\ast \zeta^{n-k},
\end{equation*}
where $\Omega = \displaystyle{\frac{\omega_0^n}{n!}}$ is the corresponding volume element on $T^\ast Q$.
\begin{remark}
{\rm Without doubt, $\Lambda_{nh}$ is Poisson if and only if $\sigma$ satisfies (\ref{damianou:cond-delta-sigma}). But, Van der Schaft and Maschke proved \cite{vm} that $\{\cdot,\cdot\}_{nh}$ satisfies the Jacobi identity if and only if the constraints (\ref{nh-C}) are holonomic. Hence, we conclude that $\sigma$ satisfies (\ref{damianou:cond-delta-sigma}) if and only if the constraint distribution $C$ is completely integrable. These facts have an interesting geometric interpretation observed by Koon and Marsden \cite{km}; the vanishing of the Schouten bracket $[\Lambda_{nh},\Lambda_{nh}]$ is equivalent with the vanishing of the curvature of an Ehresmann connection associated with the constraint distribution $C$.}
\end{remark}

\subsection{Periodic Toda and Volterra lattices}
In this paragraph we  study the linear Poisson structure $\Lambda_{_T}$ associated with  the periodic Toda lattice of $n$ particles. This Poisson structure has two well-known Casimir functions. Using  Theorem \ref{damianou:THEOREM} we construct another Poisson structure having the same Casimir invariants with $\Lambda_{_T}$. It turns out that this structure decomposes as a direct sum of two Poisson tensors one of which (involving only the $a$ variables in Flaschka's coordinates) is the quadratic Poisson bracket of the Volterra lattice (also known as the KM-system). It agrees with the general philosophy (see \cite{damianou:Dam02}) that one obtains the Volterra lattice from the Toda lattice by restricting to the $a$ variables.

The periodic Toda lattice of $n$ particles ($n\geq 2$) is the system of ordinary differential equations on $\mathbb{R}^{2n}$ which in Flaschka's \cite{damianou:fl} coordinate system $(a_1,\ldots,a_n, b_1,\ldots,b_n)$ takes the form
\begin{equation*}
\dot{a}_i = a_i(b_{i+1} - b_i) \quad \mathrm{and} \quad \dot{b}_i = 2(a_i^2 - a_{i-1}^2) \quad \quad (i \in \mathbb{Z} \;\;\; \mathrm{and} \;\;\; (a_{i+n},b_{i+n})=(a_i,b_i)).
\end{equation*}
This system is hamiltonian with respect to the nonstandard Lie-Poisson structure
\begin{equation*}\label{damianou:Poisson-Toda}
\Lambda_T = \sum_{i=1}^n a_i \frac{\partial}{\partial a_i}\wedge(\frac{\partial}{\partial b_i} - \frac{\partial}{\partial b_{i+1}})
\end{equation*}
on $\mathbb{R}^{2n}$ and it has as hamiltonian the function $H = \sum_{i = 1}^n(a_i^2 + \displaystyle{\frac{1}{2}b_i^2})$. The structure $\Lambda_T$ is of rank $2n-2$ on $\mathcal{U} = \{(a_1,\ldots,a_n,b_1,\ldots,b_n) \in \mathbb{R}^{2n} \; / \; \sum_{i=1}^n a_1\ldots a_{i-1}a_{i+1}\ldots a_n \neq 0\}$ and it admits two Casimir functions:
\begin{equation*}\label{damianou:Toda-Casimir}
C_1 = b_1 + b_2 + \ldots + b_n   \quad \quad \mathrm{and} \quad \quad C_2 = a_1a_2\ldots a_n.
\end{equation*}

We consider on $\mathbb{R}^{2n}$ the standard symplectic form $\omega_0 = \sum_{i=1}^n da_i \wedge db_i$, its associated Poisson tensor $\Lambda_0 = \sum_{i=1}^n\displaystyle{\frac{\partial}{\partial a_i}\wedge \frac{\partial}{\partial b_i}}$, and the corresponding volume element $\Omega = \displaystyle{\frac{\omega_0^n}{n!}}=da_1\wedge db_1\wedge \ldots \wedge da_n \wedge db_n$. The hamiltonian vector fields of $C_1$ and $C_2$ with respect to $\Lambda_0$ are
\begin{equation*}
X_{_{C_1}} = - \sum_{i=1}^n \frac{\partial}{\partial a_i} \quad \quad \mathrm{and} \quad \quad X_{_{C_2}} =\sum_{i=1}^n a_1\ldots a_{i-1}a_{i+1}\ldots a_n \frac{\partial}{\partial b_i}.
\end{equation*}
So, $D = \langle X_{_{C_1}}, X_{_{C_2}}\rangle$ and
\begin{equation*}
D^\circ = \big\{\sum_{i=1}^n(\alpha_i da_i + \beta_i db_i)\in \Omega^1(\mathbb{R}^{2n})\; / \; \sum_{i=1}^n\alpha_i = 0 \;\;\; \mathrm{and} \;\;\; \sum_{i=1}^n a_1\ldots a_{i-1}\beta_ia_{i+1}\ldots a_n = 0\big\}.
\end{equation*}
The family of $1$-forms $(\sigma_1,\ldots,\sigma_{n-1},\sigma'_1,\ldots,\sigma'_{n-1})$,
\begin{equation*}
\sigma_j = da_j - da_{j+1} \quad \quad \mathrm{and} \quad \quad \sigma'_{j} = a_jdb_j - a_{j+1}db_{j+1}, \quad \quad j = 1,\ldots, n-1,
\end{equation*}
provides, at every point $(a,b) \in \mathcal{U}$, a basis of $D^\circ_{(a,b)}$. The section of maximal rank $\sigma_{_T}$ of $\bigwedge^2D^\circ \to \mathcal{U}$, which corresponds to $\Lambda_T$, via the isomorphism $\Lambda_0^\#$, and verifies (\ref{damianou:cond-delta-sigma}), is written, in this basis, as
\begin{equation*}
\sigma_{_T} = \sum_{j = 1}^{n-1} \sigma_j \wedge \big(\sum_{l=j}^{n-1}\sigma'_l\big).
\end{equation*}

Now, we consider on $\mathbb{R}^{2n}$ the $2$-form
\begin{eqnarray*}
\sigma & = & \sum_{j=1}^{n-2} \sigma_j \wedge \big(\sum_{l=j+1}^{n-1}\sigma_l\big) \,+ \,\sum_{j=1}^{n-2} \sigma'_j \wedge \big(\sum_{l=j+1}^{n-1}\sigma'_l\big) \nonumber \\
& = & \sum_{j=1}^{n-2} \big[(da_j - da_{j+1}) \wedge (da_{j+1} - da_n)  +  (a_jdb_j - a_{j+1}db_{j+1}) \wedge (a_{j+1}db_{j+1} - a_ndb_n)\big] \nonumber \\
& = & \sum_{j=1}^{n}\big(da_j \wedge da_{j+1}  + a_ja_{j+1}db_j \wedge db_{j+1}\big).
\end{eqnarray*}
It is a section of $\bigwedge^2D^\circ$ whose  rank depends on  the parity of $n$; if $n$ is odd, its rank is $2n-2$ on $\mathcal{U}$, while, if $n$ is even, its rank is $2n-4$ almost everywhere on $\mathbb{R}^{2n}$. Also, after a long computation, we can confirm that it satisfies (\ref{damianou:cond-delta-sigma}). Thus, its image via $\Lambda_0^\#$, i.e., the bivector field
\begin{equation}\label{new-poisson-vol}
\Lambda = \sum_{j=1}^{n}\big(a_ja_{j+1}\frac{\partial}{\partial a_j}\wedge \frac{\partial}{\partial a_{j+1}} + \frac{\partial}{\partial b_j}\wedge \frac{\partial}{\partial b_{j+1}}\big),
\end{equation}
defines a Poisson structure on $\mathbb{R}^{2n}$ with symplectic leaves of dimension at most $2n-2$, when $n$ is odd, that has $C_1$ and $C_2$ as Casimir functions. (When $n$ is even, $\Lambda$ has two more Casimir functions.) We remark that $(\mathbb{R}^{2n},\Lambda)$ can be viewed as the product of Poisson manifolds $(\mathbb{R}^n, \Lambda_{_V})\times (\mathbb{R}^n, \Lambda')$, where
\begin{equation*}
\Lambda_{_V} = \sum_{j=1}^{n}a_ja_{j+1}\frac{\partial}{\partial a_j}\wedge \frac{\partial}{\partial a_{j+1}}  \quad \quad \mathrm{and} \quad \quad \Lambda'= \sum_{j=1}^{n}\frac{\partial}{\partial b_j}\wedge \frac{\partial}{\partial b_{j+1}}.
\end{equation*}
The Poisson tensor $\Lambda_{_V}$ is the quadratic bracket of the  periodic Volterra lattice on $\mathbb{R}^n$ and it has $C_2$ as unique Casimir function, when $n$ is odd.

\vspace{3mm}

In the following, using (\ref{damianou:bracket-Lambda-Omega}), we illustrate the explicit formul{\ae} of the brackets of $\Lambda_{_T}$ and $\Lambda$ in the special case $n=3$. We have $C_1 = b_1 + b_2 + b_3$, $C_2 = a_1a_2a_3$, $k = 2$, $\Lambda_0 = \sum_{i=1}^3\displaystyle{\frac{\partial}{\partial a_i}\wedge \frac{\partial}{\partial b_i}}$, and $\Omega = da_1\wedge db_1 \wedge da_2 \wedge db_2 \wedge da_3 \wedge db_3$. Consequently, $f = \langle dC_1\wedge dC_2, \,\Lambda_0\rangle = -(a_1a_2 + a_2a_3 +a_1a_3)$, which is a nonvanishing function on $\mathcal{U}$.

For the periodic Toda lattice of $3$ particles, we have $\sigma_{_T} = (da_1 - da_2)\wedge (a_1db_1 -a_3db_3) + (da_2 - da_3)\wedge (a_2db_2 - a_3db_3)$, $g_{_T} = i_{\Lambda_0}\sigma_{_T} = -(a_1 + a_2 +a_3)$ and
\begin{eqnarray*}
\Phi_{_T} & = & - \frac{1}{f}(\sigma_{_T} + g_{_T}\omega_0)\wedge dC_1 \wedge dC_2 \\
          & = & -a_1 db_1 \wedge da_2 \wedge da_3 \wedge db_3 + a_1 da_2 \wedge db_2 \wedge da_3 \wedge db_3 + a_2 da_1 \wedge db_1 \wedge da_3 \wedge db_3 \\
          & & -a_2 da_1 \wedge db_1 \wedge db_2 \wedge da_3 + a_3 da_1 \wedge da_2 \wedge db_2 \wedge db_3 + a_3da_1 \wedge db_1 \wedge da_2 \wedge db_2.
\end{eqnarray*}
Thus,
\begin{equation*}
\begin{array}{ll}
\{a_1,b_1\}_{_T}\Omega = da_1 \wedge db_1 \wedge \Phi_{_T} = a_1 \Omega, & \{a_1,b_2\}_{_T}\Omega = da_1 \wedge db_2 \wedge \Phi_{_T} = - a_1 \Omega, \\
\\
\{a_2,b_2\}_{_T}\Omega = da_2 \wedge db_2 \wedge \Phi_{_T} = a_2 \Omega, & \{a_2,b_3\}_{_T}\Omega = da_2 \wedge db_3 \wedge \Phi_{_T} = - a_2 \Omega,\\
\\
\{a_3,b_3\}_{_T}\Omega = da_3 \wedge db_3 \wedge \Phi_{_T} = a_3 \Omega, & \{a_3,b_1\}_{_T}\Omega = da_3 \wedge db_1 \wedge \Phi_{_T} = - a_3 \Omega,
\end{array}
\end{equation*}
and all other brackets are zero.

For the Poisson structure (\ref{new-poisson-vol}) on $\R^6$, we have $\sigma = (da_1 - da_2)\wedge (da_2 - da_3) + (a_1db_1 -a_2db_2)\wedge (a_2db_2 -a_3db_3)$, $g = i_{\Lambda_0}\sigma = 0$ and
\begin{eqnarray*}
\Phi & = & - \frac{1}{f}\sigma \wedge dC_1 \wedge dC_2 \\
          & = & -a_1a_2db_1 \wedge db_2 \wedge da_3 \wedge db_3 + a_1a_3 db_1 \wedge da_2 \wedge db_2 \wedge db_3 - a_2a_3 da_1 \wedge db_1 \wedge db_2 \wedge db_3 \\
          & & - da_1 \wedge db_1 \wedge da_2 \wedge da_3 - da_1 \wedge da_2 \wedge da_3 \wedge db_3 + da_1 \wedge da_2 \wedge db_2 \wedge da_3.
\end{eqnarray*}
Thus,
\begin{equation*}
\begin{array}{ll}
\{a_1,a_2\}\Omega = da_1 \wedge da_2 \wedge \Phi = a_1a_2 \Omega, & \{a_1,a_3\}\Omega = da_1 \wedge da_3 \wedge \Phi = - a_1a_3 \Omega,\\
\\
\{a_2,a_3\}\Omega = da_2 \wedge da_3 \wedge \Phi = a_2a_3 \Omega, & \{b_1,b_2\}\Omega = db_1 \wedge db_2 \wedge \Phi = \Omega,\\
\\
\{b_1,b_3\}\Omega = db_1 \wedge db_3 \wedge \Phi = - \Omega, & \{b_2,b_3\}\Omega = db_2 \wedge db_3 \wedge \Phi = \Omega,
\end{array}
\end{equation*}
and all other brackets are zero.

\subsection{A Lie-Poisson bracket on $\mathbf{\mathfrak{gl}(3,\R)}$}
On the $9$-dimensional space $\mathfrak{gl}(3,\R)$ of $3\times 3$ matrices
\begin{equation*}
\left(
\begin{array}{ccc}
x_1 & z_2 & y_3 \\
y_1 & x_2 & z_3 \\
z_1 & y_2 & x_3
\end{array}
\right),
\end{equation*}
which is isomorphic to $\R^9$, we consider the functions
\begin{equation*}
C_1(x,y,z) = x_1 + x_2 + x_3, \quad C_2(x,y,z) = y_1z_2 + y_2z_3 + y_3z_1 \quad \mathrm{and} \quad C_3(x,y,z) = z_1z_2z_3.
\end{equation*}
 Using  Theorem \ref{THEOREM-ODD}, we are able  to construct a linear Poisson structure $\Lambda$ on $\mathfrak{gl}(3,\R)$, with sysmplectic leaves of dimension at most $6$, having $C_1$, $C_2$ and $C_3$ as Casimir functions. For this, we consider on $\mathfrak{gl}(3,\R)\cong \R^9$ the cosymplectic structure $(\vartheta_0,\Theta_0)$,
\begin{equation*}
\vartheta_0 = dz_3 \quad \mathrm{and} \quad \Theta_0 = dx_1 \wedge dy_1 + dx_2\wedge dy_2 + dx_3 \wedge dy_3 + dz_1 \wedge dz_2,
\end{equation*}
whose  corresponding transitive Jacobi structure $(\Lambda_0,E_0)$ is:
\begin{equation*}
\Lambda_0 = \frac{\partial}{\partial x_1}\wedge \frac{\partial}{\partial y_1} + \frac{\partial}{\partial x_2}\wedge \frac{\partial}{\partial y_2} + \frac{\partial}{\partial x_3}\wedge \frac{\partial}{\partial y_3} + \frac{\partial}{\partial z_1}\wedge \frac{\partial}{\partial z_2} \quad \mathrm{and} \quad E_0 = \frac{\partial}{\partial z_3}.
\end{equation*}
Clearly,
\begin{equation*}
f = \langle dC_1\wedge dC_2\wedge dC_3,\, E_0\wedge\Lambda_0\rangle = -z_1z_2^2 - z_1^2z_2 - z_1z_2z_3
\end{equation*}
is nonzero on the open and dense subset $\mathcal{U} = \{(x,y,z)\in \R^9 \,/\, z_1z_2^2 + z_1^2z_2 + z_1z_2z_3 \neq 0\}$ of $\mathfrak{gl}(3,\R)\cong \R^9$ and
\begin{equation*}
\Omega = \vartheta_0 \wedge \Theta_0^4 = dx_1\wedge dy_1\wedge dx_2\wedge dy_2 \wedge dx_3\wedge dy_3 \wedge dz_1\wedge dz_2 \wedge dz_3
\end{equation*}
is a volume form of $\mathfrak{gl}(3,\R)$. Furthermore, we consider on $\mathfrak{gl}(3,\R)$ the pair of semi-basic forms $(\sigma, \tau)$,
\begin{eqnarray*}
\sigma & = & -z_1dx_1\wedge dx_2 -z_2 dx_2\wedge dx_3 + z_3 dx_1 \wedge dx_3 -y_1dx_1\wedge dy_1 + y_1dx_1\wedge dy_2 \\
& & - y_2dx_2 \wedge dy_2 + y_2dx_2\wedge dy_3 -y_3dx_3\wedge dy_3 +  y_3dx_3\wedge dy_1 \\
& & - z_2dy_1\wedge dz_1 -z_1dy_1\wedge\wedge dz_2 + z_2 dy_2\wedge dz_1 + z_1 dy_3\wedge dz_2
\end{eqnarray*}
and
\begin{equation*}
\tau = -z_3dy_2 + z_3 dy_3,
\end{equation*}
which has the properties (ii)-(iii) and verifies the system (\ref{cond-sigma-tau}). Thus, the bracket $\{\cdot,\cdot \}$ on $C^\infty(\mathfrak{gl}(3,\R))$ given by (\ref{br-odd}) defines a Poisson structure $\Lambda$ on $\mathfrak{gl}(3,\R)$. We have, $g = i_{\Lambda_0}\sigma = y_1 + y_2 + y_3$ and
\begin{eqnarray*}
\Phi & = & -\frac{1}{f}(\sigma + \frac{g}{2}\Theta_0)\wedge \Theta_0 \wedge dC_1 \wedge dC_2 \wedge dC_3 \\
     & = & z_1dx_1\wedge dy_1\wedge dx_2\wedge dy_2 \wedge dy_3 \wedge dz_2 \wedge dz_3  -z_1 dy_1\wedge dx_2\wedge dy_2 \wedge dx_3\wedge dy_3 \wedge dz_2 \wedge dz_3 \\
     &- & z_1 dx_1\wedge dx_2\wedge dx_3\wedge dy_3 \wedge dz_1\wedge dz_2 \wedge dz_3 - z_2 dx_1 \wedge dy_1 \wedge dx_2 \wedge dx_3 \wedge dz_1 \wedge dz_2 \wedge dz_3   \\
     & -&  z_2 dy_1\wedge  dx_2\wedge dy_2\wedge dx_3\wedge dy_3\wedge dz_1\wedge dz_3 +z_2 dy_1\wedge dz_1\wedge dx_3\wedge dy_3\wedge dz_3\wedge dy_2\wedge dx_1 \\
     & -& y_1 dx_3\wedge dy_3\wedge dz_1\wedge dz_2\wedge dz_3\wedge dy_2\wedge dx_2 -y_3 dy_1\wedge dz_1\wedge dx_2\wedge dy_2\wedge dz_3\wedge dz_2\wedge dx_3 \\
     &- & y_1 dx_1\wedge dy_2\wedge dz_1\wedge dz_2\wedge dz_3\wedge dy_3\wedge dx_3 -z_3 dy_2\wedge dz_1\wedge dx_1\wedge dy_1\wedge dz_2\wedge dy_3\wedge dx_2 \\
     &- & y_2 dx_2\wedge dy_3\wedge dz_1\wedge dz_2\wedge dz_3\wedge dy_1\wedge dx_1+z_3 dx_1\wedge dx_2\wedge dz_1\wedge dz_2\wedge dz_3\wedge dy_2\wedge dx_3\\
     &- & y_3 dy_1\wedge dz_1\wedge dx_2\wedge dy_2\wedge dz_3\wedge dz_2\wedge dx_1-z_3 dy_2\wedge dz_1\wedge dx_3\wedge dy_3\wedge dz_2\wedge dy_1\wedge dx_1\\
     &- & y_2 dy_3\wedge dz_2\wedge dx_1\wedge dy_1\wedge dz_1\wedge dz_3\wedge dx_3.
\end{eqnarray*}
So,
\begin{equation*}
\begin{array}{ll}
\{x_1,y_1\}\Omega = dx_1\wedge dy_1\wedge \Phi = -y_1\Omega, & \{x_1,y_3\}\Omega = dx_1\wedge dy_3\wedge \Phi = y_3\Omega,\\
\\
\{x_1,z_1\}\Omega = dx_1\wedge dz_1\wedge \Phi = -z_1\Omega, &  \{x_1,z_2\}\Omega = dx_1\wedge dz_2\wedge \Phi = z_2\Omega,\\
\\
\{x_2,y_1\}\Omega = dx_2\wedge dy_1\wedge \Phi = y_1\Omega, &  \{x_2,y_2\}\Omega = dx_2\wedge dy_2\wedge \Phi = -y_2\Omega,\\
\\
\{x_2,z_2\}\Omega = dx_2\wedge dz_2\wedge \Phi = -z_2\Omega, &  \{x_2,z_3\}\Omega = dx_2\wedge dz_3\wedge \Phi = z_3\Omega,\\
\\
\{x_3,y_2\}\Omega = dx_3\wedge dy_2\wedge \Phi = y_2\Omega, & \{x_3,y_3\}\Omega = dx_3\wedge dy_3\wedge \Phi = -y_3\Omega,\\
\\
\{x_3,z_1\}\Omega = dx_3\wedge dz_1\wedge \Phi = z_1\Omega, & \{x_3,z_3\}\Omega = dx_3\wedge dz_3\wedge \Phi = -z_3\Omega,\\
\\
\{y_1,y_2\}\Omega = dy_1\wedge dy_2\wedge \Phi = -z_1\Omega, &  \{y_1,y_3\}\Omega = dy_1\wedge dy_3\wedge \Phi = z_3\Omega,\\
\\
\{y_2,y_3\}\Omega = dy_2\wedge dy_3\wedge \Phi = -z_2\Omega,&
\end{array}
\end{equation*}
and all other brackets are zero.

The Lie-Poisson bracket in this example coincides with the one of the bi--Hamiltonian pair formulated by Meucci \cite{meu} for $\mathrm{Toda}_3$ system, a dynamical system studied by Kupershmidt in \cite{kup} as a reduction of the KP hierarchy. Meucci derives this structure by a suitable restriction of a related pair of Lie algebroids on the set of maps from the cyclic group $\mathbb{Z}_3$ to $\mathrm{GL}(3, \R)$. Explicit formul{\ae} for the above bracket can also be found in \cite{dam-magri} where the $\mathrm{Toda}_3$ system is reduced to the phase space of the full Kostant--Toda lattice.

\subsection*{Acknowledgements}
The authors would like to thank Professor G.  Marmo for pointing out references \cite{damianou:Grab93, damianou:Grab2} where the basic ideas of this work lie.

\vspace{5mm}
\noindent
Pantelis A. DAMIANOU
\\
\emph{Department of Mathematics and Statistics}, \emph{University of Cyprus} \\ \emph{P.O. Box 20537, 1678 Nicosia, Cyprus} \\
\noindent\emph{E-mail: damianou@ucy.ac.cy}
\\
\\
\noindent
Fani PETALIDOU
\\
\emph{Department of Mathematics and Statistics}, \emph{University of Cyprus} \\
\emph{P.O. Box 20537, 1678 Nicosia, Cyprus} \\
\noindent\emph{E-mail: petalido@ucy.ac.cy}
\end{document}